\newtheorem{theorem}{Theorem}[section]
\newtheorem{lemma}[theorem]{Lemma}
\theoremstyle{definition}
\theoremstyle{remark}
\numberwithin{equation}{section}
\begin{document}
	
	\title[ Linear maps on $\mathcal{L}(\ell_{\MakeLowercase{p}}^{\MakeLowercase{n}},\ell_{\MakeLowercase{p}}^{\MakeLowercase{m}}), ~({\MakeLowercase{p}}\in \{1,\infty\})$ preserving parallel pairs] {{Linear maps on $\mathcal{L}(\ell_{\MakeLowercase{p}}^{\MakeLowercase{n}},\ell_{\MakeLowercase{p}}^{\MakeLowercase{m}}),~~({\MakeLowercase{p}}\in \{1,\infty\})$ preserving parallel pairs}}

\author[Arpita Mal]{Arpita Mal}

\address[]{Dhirubhai Ambani University\\ Gandhinagar-382007\\ India.}
\email{arpitamalju@gmail.com}



\subjclass[2020]{Primary 15A86, 15A60, Secondary  47B01, 46B20}
\keywords{Parallelism; parallel pair preserver; triangle equality attaining pair preserver; linear preserver problem}



\date{}
\maketitle
\begin{abstract}
Two vectors $x,y$ of a Banach space are said to form a parallel (resp. triangle equality attaining or TEA) pair if $\|x+\lambda y\|=\|x\|+\|y\|$ holds for some scalar $\lambda$ with $|\lambda|=1$ (resp. $\lambda=1$). For $p\in \{1,\infty\},$ and $ m,n\geq 2,$ we study the linear maps $T:\mathcal{L}(\ell_p^{\MakeLowercase{n}},\ell_p^{\MakeLowercase{m}})\to \mathcal{L}(\ell_p^{\MakeLowercase{n}},\ell_p^{\MakeLowercase{m}})$ that preserve parallel (resp. TEA) pairs, that is, those linear maps $T$ for which $T(A),T(B)$ form a parallel  (resp. TEA) pair whenever $A,B$ form a parallel (resp. TEA) pair of $\mathcal{L}(\ell_p^{\MakeLowercase{n}},\ell_p^{\MakeLowercase{m}}).$ 
We prove that if $T$ is non-zero, then the following are equivalent.
	\begin{enumerate}
	\item $T$ preserves TEA pairs.
	\item $T$ preserves parallel pairs and $rank(T)>1$.
	\item $T$ preserves parallel pairs and $T$ is invertible.
	\item  $T$ is a scalar multiple of an isometry.	
\end{enumerate}
\end{abstract}

\section{Introduction}
Suppose $x,y$ are two vectors of a Banach space $\mathcal{X}$ over the field $\mathbb{F}\in\{\mathbb{R}, \mathbb{C}\}.$ If equality holds in the triangle inequality, that is, if 
$$\|x+y\|=\|x\|+\|y\|$$
 holds, then we say that $x,y$ form a  triangle equality attaining pair, in short TEA pair. On the other hand, if there exists $\lambda\in \mathbb{T}:=\{\mu\in \mathbb{F}:|\mu|=1\}$ such that 
 $$\|x+\lambda y\|=\|x\|+\|y\|,$$
  then we say that $x,y$ are parallel, and denote it by $x\parallel y.$ Clearly, this notion is symmetric and homogeneous, that is, $x\parallel y \Leftrightarrow y\parallel x,$ and $x\parallel y\Rightarrow \alpha x\parallel \beta y  $ for all $\alpha,\beta\in \mathbb{F}.$ If $x\parallel y,$ we also say that $x,y$ form a parallel pair.  A linear map $T:\mathcal{X}\to \mathcal{X}$ preserves parallel pairs if 
\[x\parallel y\quad \Rightarrow \quad  T(x)\parallel T(y) \text{ for all } x,y\in \mathcal{X},\] whereas $T$ preserves TEA pairs, if $T(x),T(y)$ form a  TEA pair whenever $x,y$ form a TEA pair. 
Note that, any two linearly dependent vectors are always parallel. Therefore, if $rank(T)=1,$ then $T$ must preserve parallel pairs.  Moreover, if $T$ preserves TEA pairs, then $T$ also preserves parallel pairs. However, the converse is not true. For example, suppose $T:\ell_1^2\to\ell_1^2$ is defined as $T(a,b)=(-3a+b)(1,0).$ Then since $rank(T)=1,$ $T$ preserves parallel pairs, but $T(0,1),T(1,1)$ do not form a TEA pair, whereas $(0,1),(1,1)$ form a TEA pair. On the other hand, if $T$ is a scalar multiple of an isometry on $\mathcal{X}$, then $T$ clearly preserves parallel pairs as well as TEA pairs. However, there exist linear maps preserving TEA (or parallel) pairs  which are not necessarily scalar multiple of isometries, see \cite{LTWW}. Moreover, linear maps preserving parallel or TEA pairs may vary significantly for real and complex scalars, for example see \cite{LTWW2}. These  changes of such maps on different spaces makes the study more fascinating.
Linear maps preserving parallel (or TEA) pairs have been recently studied by several authors on different Banach spaces. See \cite{KLPS,KLSP,LTWW, LTWW2} for the study on $\ell_p$, 
and on the spaces of matrices equipped with the spectral norm, Ky-Fan $k$-norm, and  $k$-numerical radius.
\\

In this article, we consider $\mathcal{L}(\ell_p^n,\ell_p^m),$ the space of linear operators from $\ell_p^n$ to $\ell_p^m,$ equipped with the usual operator norm, where $p\in \{1,\infty\},$ and study parallel or TEA pairs preserving linear maps on this space.  Since for $m=1,$ the space $\mathcal{L}(\ell_p^n,\ell_p^m)$ is isometrically isomorphic to either  $\ell_1^n$ or $\ell_\infty^n,$ the case is already considered in \cite{LTWW}. Therefore,   we always assume that $n,m>1.$ After some preliminary results in section \ref{sec-pre}, 
 we prove the following theorem in  section \ref{sec-proof}.
\begin{theorem}\label{th-01}
	Suppose $T:\mathcal{L}(\ell_p^n,\ell_p^m)\to \mathcal{L}(\ell_p^n,\ell_p^m)$ is a non-zero linear map, where $p\in \{1,\infty\}$. Then the following are equivalent.\\
	\indent \quad$(1)$ $T$ preserves TEA pairs.\\
	\indent \quad	 $(2)$ $T$ preserves parallel pairs and $rank(T)>1$.\\	 
	\indent \quad	 $(3)$ $T$ preserves parallel pairs and $T$ is invertible.\\ 
	\indent \quad	 $(4)$  $T$ is a scalar multiple of an isometry.	
\end{theorem}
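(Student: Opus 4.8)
The plan is to establish $(4)\Rightarrow(1)\Rightarrow(2)\Rightarrow(4)$ together with the trivial chain $(4)\Rightarrow(3)\Rightarrow(2)$, which closes the loop. Several links are immediate from the definitions: a scalar multiple of an isometry preserves the equality $\|x+y\|=\|x\|+\|y\|$, so $(4)\Rightarrow(1)$; every TEA-preserver is a parallel-preserver; an isometry is invertible, so $(4)\Rightarrow(3)$; and $\dim\mathcal L(\ell_p^n,\ell_p^m)=mn>1$ forces an invertible map to have rank $>1$, so $(3)\Rightarrow(2)$. Thus the real content lies in two places: showing that no rank-one map preserves TEA pairs (which, under hypothesis $(1)$, promotes ``$T$ preserves parallel pairs'' to $(2)$), and the substantive implication $(2)\Rightarrow(4)$.

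For the rank-one step, write $T(X)=f(X)C$ with $f$ a nonzero linear functional and $C\neq0$; then $T(A),T(B)$ form a TEA pair precisely when $f(A),f(B)$ lie on a common ray (in particular when one of them vanishes), so it suffices to produce a TEA pair $A,B$ with $f(A)$ and $f(B)$ both nonzero and not positively proportional. This is where $m,n\geq2$ is used. Identifying $\mathcal L(\ell_\infty^n,\ell_\infty^m)$ with the $m\times n$ matrices under $\|A\|=\max_i\sum_j|a_{ij}|$ (and $\mathcal L(\ell_1^n,\ell_1^m)$ with the transposed picture), one observes that two matrices which coincide in a coordinate carrying the whole norm of each form a TEA pair; picking that coordinate in a ``spare'' row while perturbing another entry on which $f$ does not vanish yields the desired pair. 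A brief case analysis according to where $f$ is nonzero makes this rigorous.

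For $(2)\Rightarrow(4)$, since transposing matrices identifies $\mathcal L(\ell_1^n,\ell_1^m)$ with $\mathcal L(\ell_\infty^m,\ell_\infty^n)$, we may assume $p=\infty$ and work in $\mathcal M$, the space of $m\times n$ matrices with $\|A\|=\max_i\|A_i\|_1=\max_i\sum_j|a_{ij}|$, i.e.\ $\mathcal M=\ell_\infty^m(\ell_1^n)$. Two preliminary facts are needed and should be recorded first: (i) the concrete form of parallelism in $\mathcal M$ --- $A\parallel B$ iff there is a row index $i$ with $\|A_i\|_1=\|A\|$, $\|B_i\|_1=\|B\|$ and $A_i\parallel B_i$ in $\ell_1^n$, equivalently $A$ and $B$ share a common norming functional among the extreme functionals $\phi_{i,\varepsilon}(X)=\sum_j\varepsilon_j x_{ij}$ ($i\le m$, $\varepsilon_j\in\mathbb T$); and (ii) the classification of isometries of $\mathcal M$ as the maps $A\mapsto RAS$ with $R,S$ permutation matrices with unimodular entries (plus a transpose when $m=n$). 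The argument then follows the standard linear-preserver template: using $E_{ij}\parallel E_{kl}\iff i=k$, pin down the images of the matrix units; show $T$ must send each row-subspace $\{X:X_l=0\ \text{for}\ l\neq i\}\cong\ell_1^n$ into a single row-subspace; and then exploit the cross-row coupling in (i) --- a pair is parallel only when it is simultaneously norm-attained on one common row --- to rigidify the row-wise actions into one global map $A\mapsto cRAS$. Here $\operatorname{rank}(T)>1$ is exactly what excludes the degenerate alternative in which $T$ collapses all but one row; in particular the conclusion $T=c\cdot(\text{isometry})$ gives $(2)\Rightarrow(3)$ as a by-product.

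The main obstacle is the middle portion of $(2)\Rightarrow(4)$. Because parallelism is preserved only in one direction, the implication $A\parallel B\Rightarrow T(A)\parallel T(B)$ does not by itself say that $T$ reflects the block structure of $\ell_\infty^m(\ell_1^n)$, and a priori $T$ could have intermediate rank $1<\operatorname{rank}(T)<mn$; moreover, genuinely exotic parallel-preservers do exist on $\ell_1^n$ in isolation (cf.\ \cite{LTWW}), so the rigidity must come from the ambient matrix norm rather than from each row separately. The delicate work is to manufacture enough families of parallel pairs --- fixing all but one matrix unit and sweeping the remaining one through a parallel pencil --- so that linearity together with fact (i) forces the images into the block structure, and to invoke $\operatorname{rank}(T)>1$ precisely at the step where a rank-one ``everything onto one line'' map would otherwise slip through. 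The real/complex distinction enters only through the description of parallel pairs in $\ell_1^n$ (coordinatewise equal signs versus coordinatewise equal phases) and is absorbed entirely into the preliminary lemmas, so the scheme above runs uniformly in both settings.
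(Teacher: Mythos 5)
Your logical skeleton ($(4)\Rightarrow(1)\Rightarrow(2)\Rightarrow(4)$ plus $(4)\Rightarrow(3)\Rightarrow(2)$) is valid, and your reduction between $p=1$ and $p=\infty$ by transposition is exactly what the paper does. The rank-one exclusion is also essentially the right idea, though your concrete construction has a soft spot: if you take $A=A_0+tD$, $B=A_0-tD$ with a norming row fixed in $A_0$ and $D$ supported in a spare row, then for $f(A_0)\neq 0$ the values $f(A)=f(A_0)+tf(D)$ and $f(B)=f(A_0)-tf(D)$ may well stay on a common ray for every admissible $t$ (the size of $t$ is capped by the requirement that the spare row not overtake the norm), so the ``brief case analysis'' is not automatic. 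The paper instead fixes an extreme contraction $S$ with $T(S)\neq 0$, builds extreme contractions $U,V$ agreeing with $S$ off one coordinate, uses $\|\alpha S+\beta U\|=|\alpha|+|\beta|$ to produce TEA pairs $\alpha S,\beta U$ for the \emph{specific} coefficients of a linear dependence of $T(S),T(U)$, and concludes $T(S)=0$ for every extreme contraction; a separate lemma handles the degenerate case $T(U-V)=0$.

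The genuine gap is $(2)\Rightarrow(4)$, which in your write-up is a plan rather than a proof: the two sentences ``pin down the images of the matrix units'' and ``manufacture enough families of parallel pairs \dots to rigidify the row-wise actions'' are precisely where all the difficulty lives, and you correctly identify the obstacle (one-directional preservation gives no information from non-parallel pairs, and intermediate ranks are not excluded a priori) without resolving it. The paper's resolution is a specific two-stage strategy that your outline does not contain. First, invertibility is extracted from $\mathrm{rank}(T)>1$ by taking a kernel element $A$ minimizing the number of linearly independent norm-attaining coordinate directions, and repeatedly applying the key dimension lemma (Lemma \ref{lem-pdim}: if every pair in a subspace is parallel, the subspace is at most one-dimensional) to manufacture new kernel elements with strictly fewer norming directions --- this is the tool that converts ``many parallel pairs in the image'' into rank information, and it is absent from your sketch. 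Second, $(3)\Rightarrow(4)$ is proved not by classifying the isometries of $\ell_\infty^m(\ell_1^n)$ (your preliminary fact (ii), which the paper never needs) but by showing $\|T(A)\|=\|T\|$ for every unit vector: first for extreme contractions (Lemmas \ref{lem-01}--\ref{lem-05}, a long case analysis on linear dependence of the images of four related extreme contractions), then for smooth points by a convexity argument, then everywhere by density. That argument uses invertibility in an essential way (e.g., pulling $E_{1i}$ and the span of $\{E_{in}\}$ back through $T^{-1}$), so your proposal to bypass $(2)\Rightarrow(3)$ and go straight to $(4)$ would have to rebuild these steps without $T^{-1}$, and nothing in the outline indicates how. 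As it stands the central implication is unproved.
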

The proof of the theorem is quiet involved and technical. Note that, $(4)\Rightarrow (1)$ is trivial. In subsections \ref{sub-12}, \ref{sub-23}, and \ref{sub-34}, we  prove the implications $(1) \Rightarrow (2),$  $(2) \Rightarrow (3),$ and $(3)\Rightarrow (4),$ respectively for $p=1$. 
Finally, in subsection \ref{sub-infty}, we prove the theorem for $p=\infty.$

\section{preliminaries}\label{sec-pre}
Throughout the article, the extreme contractions of $\mathcal{L}(\ell_1^n,\ell_1^m)$ play a crucial role. Recall that the extreme points of the closed unit ball of $\mathcal{L}(\ell_1^n,\ell_1^m)$ are called extreme contractions of the space. Suppose $E_{\mathcal{X}}$ denotes the set of all extreme points of the closed unit ball of a Banach $\mathcal{X}.$ It is easy to observe the following lemma.  For a proof in a more general case, see \cite[Lem. 2.8]{Sh}.
\begin{lemma}
	An operator $S\in \mathcal{L}(\ell_1^n,\ell_1^m)$ is an extreme contraction if and only if $S(E_{\ell_1^n})\subset E_{\ell_1^m}.$   
\end{lemma}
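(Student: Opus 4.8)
The plan is to exploit that, $\ell_1^n$ being finite dimensional, its closed unit ball is the convex hull of $E_{\ell_1^n}=\{\pm e_1,\dots,\pm e_n\}$, whence for every $S\in\mathcal{L}(\ell_1^n,\ell_1^m)$ one has $\|S\|=\max_{1\le i\le n}\|Se_i\|_1$. Two consequences drive the whole argument: $S$ is a contraction precisely when $Se_i\in B_{\ell_1^m}$ for each $i$, and an operator is freely and completely determined by its values on the basis $e_1,\dots,e_n$. Both implications then reduce to reasoning one basis vector at a time, and since $E_{\ell_1^n}=\{\pm e_i\}$ while $E_{\ell_1^m}$ is symmetric, the condition ``$S(E_{\ell_1^n})\subset E_{\ell_1^m}$'' is the same as ``$Se_i\in E_{\ell_1^m}$ for all $i$''.

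For the ``if'' direction, suppose $Se_i\in E_{\ell_1^m}$ for every $i$. Each $Se_i$ then has norm $1$, so $\|S\|=1$ and $S$ lies in the unit ball. If $S=\tfrac12(S_1+S_2)$ with $S_1,S_2$ contractions, then for each $i$ we get $Se_i=\tfrac12(S_1e_i+S_2e_i)$ with $S_1e_i,S_2e_i\in B_{\ell_1^m}$; since $Se_i$ is an extreme point of $B_{\ell_1^m}$, necessarily $S_1e_i=S_2e_i=Se_i$. As this holds on every basis vector, $S_1=S_2=S$, so $S$ is an extreme contraction.

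For the ``only if'' direction I argue by contraposition. If $Se_1\notin E_{\ell_1^m}$ for some index (say $1$), then $Se_1\in B_{\ell_1^m}$ fails to be an extreme point of that ball --- this covers the case $\|Se_1\|_1<1$ as well, since a point of norm $<1$ is interior to $B_{\ell_1^m}$ and hence not extreme --- so we may write $Se_1=\tfrac12(u+w)$ with $u\neq w$ in $B_{\ell_1^m}$. Define $S_1,S_2\in\mathcal{L}(\ell_1^n,\ell_1^m)$ by $S_1e_1=u$, $S_2e_1=w$, and $S_je_i=Se_i$ for $i\ge 2$. Then $\|S_j\|=\max\{\|S_je_1\|_1,\ \max_{i\ge2}\|Se_i\|_1\}\le 1$, so both are contractions; they are distinct since $u\neq w$; and $S=\tfrac12(S_1+S_2)$. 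Hence $S$ is not an extreme contraction.

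This is essentially bookkeeping and I do not anticipate a genuine obstacle; the one point deserving care is the identity $\|S\|=\max_i\|Se_i\|_1$, which is exactly what makes the perturbed maps $S_1,S_2$ automatically contractions and localizes the failure of extremality to a single basis vector. Conceptually it reflects that $S\mapsto(Se_1,\dots,Se_n)$ identifies $\mathcal{L}(\ell_1^n,\ell_1^m)$ isometrically with the $\ell_\infty$-sum of $n$ copies of $\ell_1^m$, whose unit ball is a product of balls and therefore has extreme points precisely the tuples of extreme points; the same reasoning works in the generality of \cite[Lem. 2.8]{Sh}.
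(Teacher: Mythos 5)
Your proof is correct. Note, however, that the paper does not actually prove this lemma: it calls it ``easy to observe'' and refers to \cite[Lem.~2.8]{Sh} for a more general statement, so there is no in-paper argument to compare against. Your write-up supplies exactly the standard argument that the citation stands in for: the identification of $\mathcal{L}(\ell_1^n,\ell_1^m)$ with the $\ell_\infty$-sum of $n$ copies of $\ell_1^m$ via $S\mapsto(Se_1,\dots,Se_n)$, the resulting formula $\|S\|=\max_i\|Se_i\|_1$, and the fact that extreme points of a product of balls are precisely the tuples of extreme points. Both directions are handled soundly, including the observation that a non-extreme point of a convex body (whether of norm $1$ or of norm $<1$) is the midpoint of two distinct points of the body, which is what makes the perturbation at a single basis vector work.

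The only point to tidy up is the description $E_{\ell_1^n}=\{\pm e_1,\dots,\pm e_n\}$, which is the real case; the paper allows $\mathbb{F}\in\{\mathbb{R},\mathbb{C}\}$, and for complex scalars $E_{\ell_1^n}=\{\lambda e_i:\lambda\in\mathbb{T},\,1\le i\le n\}$. This costs nothing: since $E_{\ell_1^m}$ is invariant under multiplication by unimodular scalars, the condition $S(E_{\ell_1^n})\subset E_{\ell_1^m}$ is still equivalent to $Se_i\in E_{\ell_1^m}$ for all $i$, and the rest of your argument goes through verbatim.
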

We use the following characterization of parallel pairs of $\mathcal{L}(\ell_1^n,\ell_1^m)$ without mentioning further.  A detailed study on parallel pairs on Banach spaces, and on operators spaces can be found in \cite{BCMWZ,MSP,NT,Se,ZM}. For $A\in \mathcal{L}(\ell_1^n,\ell_1^m),$ suppose $M_A$ denotes the collection of all unit vectors of $\ell_1^n,$ where $A$ attains its norm, that is, 
$$M_A:=\{x\in \ell_1^n:\|x\|=1,\|Ax\|_1=\|A\|\}.$$
\begin{theorem}\cite[Th. 2.3, Rem. 2.4]{MSP}
	Suppose $A,B\in \mathcal{L}(\ell_1^n,\ell_1^m).$ Then the following are equivalent.
	\begin{itemize}
		\item $A\parallel B$ (resp. $A,B$ form a TEA pair).
		\item There exists $x\in M_A\cap M_B$ such that $Ax\parallel Bx$ (resp. $Ax,Bx$ form a TEA pair).	
	\end{itemize}
\end{theorem}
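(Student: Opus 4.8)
The plan is to prove the two implications separately, handling the parallel and the TEA versions together by keeping the scalar $\lambda\in\mathbb{T}$ (resp. $\lambda=1$) explicit throughout. The implication from the local condition to $A\parallel B$ is the elementary direction and uses nothing beyond the operator-norm inequality, whereas the reverse implication is where the finite dimensionality of $\ell_1^n$ enters, through compactness of its closed unit ball and the resulting norm attainment.

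For the easy direction, I would assume there is a unit vector $x\in M_A\cap M_B$ and a scalar $\lambda\in\mathbb{T}$ with $\|Ax+\lambda Bx\|_1=\|Ax\|_1+\|Bx\|_1$. Since $x\in M_A\cap M_B$ gives $\|Ax\|_1=\|A\|$ and $\|Bx\|_1=\|B\|$, the right-hand side equals $\|A\|+\|B\|$. Evaluating $A+\lambda B$ at the unit vector $x$ then yields
$$\|A+\lambda B\|\ge \|(A+\lambda B)x\|_1=\|Ax+\lambda Bx\|_1=\|A\|+\|B\|,$$
while the triangle inequality for the operator norm gives the reverse bound $\|A+\lambda B\|\le\|A\|+\|B\|$. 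Hence $\|A+\lambda B\|=\|A\|+\|B\|$, i.e. $A\parallel B$, with $\lambda=1$ in the TEA case.

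For the forward direction, I would start from $\|A+\lambda B\|=\|A\|+\|B\|$ for some $\lambda\in\mathbb{T}$. Because $\ell_1^n$ is finite-dimensional, its unit ball is compact and $A+\lambda B$ attains its norm at some unit vector $x$. The heart of the argument is the squeezed chain
$$\|A\|+\|B\|=\|(A+\lambda B)x\|_1=\|Ax+\lambda Bx\|_1\le\|Ax\|_1+\|Bx\|_1\le\|A\|+\|B\|.$$
As the two extremes coincide, every inequality is forced to be an equality. The last inequality, together with the separate bounds $\|Ax\|_1\le\|A\|$ and $\|Bx\|_1\le\|B\|$, can only be saturated when both hold with equality, so $\|Ax\|_1=\|A\|$ and $\|Bx\|_1=\|B\|$, i.e. $x\in M_A\cap M_B$; and equality in the middle triangle inequality says precisely $\|Ax+\lambda Bx\|_1=\|Ax\|_1+\|Bx\|_1$, i.e. $Ax\parallel Bx$ with witnessing scalar $\lambda$. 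The TEA case is identical with $\lambda$ fixed at $1$.

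The argument is short because the only genuine ingredient beyond the triangle inequality is norm attainment, which finite dimensionality supplies for free; no structural feature of $\ell_1$ is actually used. The one point that requires care, and which I regard as the main (if mild) obstacle, is the scalar bookkeeping: one must use the \emph{same} $\lambda$ to witness both the operator-level and the vector-level triangle equality, and one must verify that the two individual norm bounds for $Ax$ and $Bx$ are compelled to saturate simultaneously by the collapse of the chain, rather than trading slack against each other.
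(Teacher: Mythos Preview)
Your argument is correct. Both directions are handled cleanly: the backward implication uses only the operator-norm inequality, and the forward implication exploits norm attainment in finite dimensions followed by the collapse of the chain of inequalities, forcing $x\in M_A\cap M_B$ and $\|Ax+\lambda Bx\|_1=\|Ax\|_1+\|Bx\|_1$ simultaneously. Your remark that the same $\lambda$ witnesses the vector-level equality is exactly the point needed for the TEA version.

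As for comparison: the paper does not prove this theorem at all---it is quoted from \cite[Th.~2.3, Rem.~2.4]{MSP} and used as a black box throughout. So there is no in-paper argument to compare against; your self-contained proof is a welcome addition and, as you note, uses nothing specific to $\ell_1$ beyond finite dimensionality.
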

The following lemma will also be used frequently. The lemma is proved for particular cases in \cite{LTWW,LTWW2}. Here we provide a proof for arbitrary separable Banach  spaces. Recall that a non-zero vector $x\in \mathcal{X}$ is said to be smooth if the set 
\[J(x):=\{f\in  \mathcal{X}^*:\|f\|=1,f(x)=\|x\| \}\]
is singleton, where $\mathcal{X}^*$ is the dual space of the Banach space $\mathcal{X}.$

\begin{lemma}\label{lem-pdim}
	Suppose $x\parallel y$ for all $x,y\in \mathcal{X},$ where $\mathcal{X}$ is a separable Banach space. Then $\dim(\mathcal{X})\leq 1.$	
\end{lemma}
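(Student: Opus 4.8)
The plan is to argue by contraposition: assuming $\dim(\mathcal{X}) \geq 2$, I will exhibit two vectors that are not parallel. The natural strategy is to first reduce to a situation where smoothness is available, since smooth points have a unique supporting functional and the triangle-equality condition $\|x + \lambda y\| = \|x\| + \|y\|$ translates cleanly into a statement about that functional. Specifically, if $x$ is smooth with $J(x) = \{f\}$, then $\|x + \lambda y\| = \|x\| + \|y\|$ forces $f(x + \lambda y) = \|x\| + \|y\|$, hence $f(\lambda y) = \|y\|$, i.e. $\lambda y$ is a norming vector for $f$. So $x \parallel y$ with $x$ smooth means precisely that some unimodular rotate of $y$ is normed by the same functional $f$ that norms $x$.

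First I would invoke separability to get a smooth point: by the Mazur theorem, the set of smooth points of a separable Banach space is a dense $G_\delta$ in $\mathcal{X}$, in particular nonempty, so pick a smooth unit vector $x$ with $J(x) = \{f\}$. Now if $\dim(\mathcal{X}) \geq 2$, the kernel of $f$ is a subspace of dimension $\geq 1$, so choose a nonzero $y \in \ker f$. By the hypothesis $x \parallel y$, so there is $\lambda \in \mathbb{T}$ with $\|x + \lambda y\| = \|x\| + \|y\| = 1 + \|y\|$. Applying $f$ gives $1 + \bar\lambda^{-1}\cdot 0 = \|x+\lambda y\| $ — more carefully, $\|x+\lambda y\| = f(x+\lambda y) = f(x) + \lambda f(y) = 1 + 0 = 1$, which contradicts $\|x+\lambda y\| = 1 + \|y\| > 1$. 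Hence $\dim(\mathcal{X}) \leq 1$.

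The only subtlety, and the place I would be most careful, is the complex case: I should make sure that for a smooth point the equality $\|x + \lambda y\| = \|x\| + \|y\|$ genuinely forces $f(x + \lambda y) = \|x+\lambda y\|$ (this is just $|f(x+\lambda y)| \le \|x+\lambda y\| = \|x\|+\|y\| = f(x) + |f(\lambda y)| \ge f(x) + \operatorname{Re} f(\lambda y) = \operatorname{Re} f(x+\lambda y) \le |f(x+\lambda y)|$, forcing equality throughout, so in particular $f(\lambda y)$ is a nonnegative real equal to $\|y\|$). With $y \in \ker f$ this still yields the contradiction directly, since then $\|y\| = f(\lambda y) = 0$. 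So the argument goes through uniformly over $\mathbb{F} \in \{\mathbb{R}, \mathbb{C}\}$, and the separability hypothesis is used only to guarantee the existence of a smooth point.
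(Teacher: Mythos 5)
Your overall architecture is exactly the paper's: use separability (Mazur) to produce a smooth point $x$ with $J(x)=\{f\}$, take $y\in\ker f$, and show that $x\parallel y$ forces $f(\lambda y)=\|y\|$, hence $y=0$ and $\ker f=\{0\}$. The paper simply cites Nakamoto--Takahasi for the implication ``$\|x+\lambda y\|=\|x\|+\|y\|$ with $x$ smooth $\Rightarrow f(\lambda y)=\|y\|$,'' whereas you try to prove it inline, and that inline justification is where there is a genuine gap. Your chain
\[
|f(x+\lambda y)| \le \|x+\lambda y\| = \|x\|+\|y\| = f(x)+|f(\lambda y)| \ge \cdots
\]
breaks at the equality $\|x\|+\|y\| = f(x)+|f(\lambda y)|$: since $\|f\|=1$ one only has $|f(\lambda y)|\le\|y\|$, and for $y\in\ker f$ this ``equality'' reads $\|y\|=0$, which is precisely the conclusion you are trying to reach. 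Moreover, even granting it, your chain contains a $\ge$ pointing the wrong way, so it is not a closed cycle of inequalities and forces nothing. The assertion in your main paragraph that $\|x+\lambda y\| = f(x+\lambda y)$ (i.e.\ that $f$ norms $x+\lambda y$) is true but cannot be obtained by merely applying $f$; a norm-one functional attaining its norm at $x$ need not attain it at $x+\lambda y$ without further argument.

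The repair is standard and short: by Hahn--Banach choose $g$ with $\|g\|=1$ and $g(x+\lambda y)=\|x+\lambda y\|=\|x\|+\|y\|$. Taking real parts, $\Re g(x)+\Re g(\lambda y)=\|x\|+\|y\|$ with $\Re g(x)\le\|x\|$ and $\Re g(\lambda y)\le\|y\|$ forces $\Re g(x)=\|x\|$, whence $g(x)=\|x\|$ and $g\in J(x)=\{f\}$, so $g=f$; then $\|y\|=\Re g(\lambda y)=\Re\bigl(\lambda f(y)\bigr)=0$. This is exactly the content of the cited result in \cite{NT}; with that step justified, your proof coincides with the paper's.
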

\begin{proof}
	Since $\mathcal{X}$ is separable, the set of smooth vectors of $\mathcal{X}$ is non-empty (see \cite[P. 171]{H}). Let $x\in \mathcal{X}$ be smooth. Suppose $J(x)=\{f\}.$ Let $y\in \ker(f).$ Since $x\parallel y,$ assume $\|x+\lambda y\|=\|x\|+\|y\|,$ for some $\lambda\in \mathbb{T}.$ Now from \cite{NT} it follows that $f(\lambda y)=\|y\|,$ and so $y=0.$ Thus $\ker(f)=\{0\},$ consequently, $\dim(\mathcal{X})\leq 1.$  
\end{proof}

The following description of smooth vectors of $\mathcal{L}(\ell_1^n,\ell_1^m)$ is an integral part of the proof of  the implication $(3)\Rightarrow (4)$ of Theorem \ref{th-01}. For a detailed study on smooth operators see \cite[Ch. 6]{MPS}.

\begin{lemma}\label{lem-06}
	Suppose $A\in \mathcal{L}(\ell_1^n,\ell_1^m)$ is smooth and $\|A\|=1.$  Then there exists $1\leq j\leq n$ such that $\|Ae_j\|_1=1$ and $\|Ae_i\|_1<1$ for all $1\leq i~(\neq j)\leq n.$ Moreover, $Ae_j=(c_{1j},c_{2j},\ldots, c_{mj}),$ where $c_{kj}\neq 0$ for all $1\leq k\leq m.$
\end{lemma}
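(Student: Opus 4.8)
The plan is to pass to the standard isometric identification $\mathcal{L}(\ell_1^n,\ell_1^m)\cong\ell_\infty^n(\ell_1^m)$ via $C\mapsto(Ce_1,\dots,Ce_n)$, under which $\|C\|=\max_{1\le i\le n}\|Ce_i\|_1$ (a map out of $\ell_1^n$ attains its norm at an extreme point $\pm e_i$ of the unit ball, since $x\mapsto\|Cx\|_1$ is convex). Dually, $\mathcal{L}(\ell_1^n,\ell_1^m)^*\cong\ell_1^n(\ell_\infty^m)$, where $\Phi=(\phi_1,\dots,\phi_n)$ with $\phi_i\in\ell_\infty^m$ acts by $\Phi(C)=\sum_{i=1}^n\phi_i(Ce_i)$ and $\|\Phi\|=\sum_{i=1}^n\|\phi_i\|_\infty$. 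Since $\|A\|=1$, the set $J:=\{i:\|Ae_i\|_1=1\}$ is non-empty, and the two claims of the lemma amount to saying that $J=\{j\}$ is a singleton and that $Ae_j$ has no zero coordinate.

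The crux is to produce enough of $J(A)$. First I would observe that if $i_0\in J$ and $g\in\ell_\infty^m$ norms the vector $Ae_{i_0}$ (i.e.\ $\|g\|_\infty=1$ and $g(Ae_{i_0})=\|Ae_{i_0}\|_1=1$), then the functional $\Phi_{i_0,g}\colon C\mapsto g(Ce_{i_0})$ lies in $J(A)$: indeed $\|\Phi_{i_0,g}\|=\|g\|_\infty=1$ and $\Phi_{i_0,g}(A)=g(Ae_{i_0})=1=\|A\|$. Moreover $g\mapsto\Phi_{i_0,g}$ is injective, because $C\mapsto Ce_{i_0}$ maps onto $\ell_1^m$. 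Now if $J$ contained two distinct indices $i_1,i_2$, then choosing (via Hahn--Banach, or just directly) norming functionals $g_1$ of $Ae_{i_1}$ and $g_2$ of $Ae_{i_2}$ would produce $\Phi_{i_1,g_1},\Phi_{i_2,g_2}\in J(A)$ which are distinct, since their representatives in $\ell_1^n(\ell_\infty^m)$ live on different coordinate blocks. This contradicts smoothness of $A$, so $J=\{j\}$; that is, $\|Ae_j\|_1=1$ and $\|Ae_i\|_1<1$ for all $i\neq j$.

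For the moreover part, write $Ae_j=(c_{1j},\dots,c_{mj})$ and suppose some $c_{kj}=0$. For a non-zero $v=(v_1,\dots,v_m)\in\ell_1^m$ the norming functionals in $\ell_\infty^m$ are precisely the $g=(g_1,\dots,g_m)$ with $g_l=\operatorname{sgn}(v_l)$ whenever $v_l\neq0$ and $g_l\in[-1,1]$ arbitrary whenever $v_l=0$. Applying this to $v=Ae_j$, the slack in the $k$-th coordinate yields two distinct norming functionals $g\neq g'$ of $Ae_j$ (say $g_k=0$ and $g'_k=1$). Then $\Phi_{j,g}$ and $\Phi_{j,g'}$ are two distinct elements of $J(A)$ by the previous paragraph, again contradicting smoothness. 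Hence $c_{kj}\neq0$ for every $1\le k\le m$.

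The only genuinely delicate point is correctly pinning down the isometric duality $\mathcal{L}(\ell_1^n,\ell_1^m)^*\cong\ell_1^n(\ell_\infty^m)$ together with the resulting supply of norming functionals of $A$; once that is in hand, both parts reduce to exhibiting two distinct members of $J(A)$, so I do not expect a real obstacle. Everything could equivalently be phrased as Gâteaux differentiability of $\|\cdot\|$ at $A$, which amounts to $|J(A)|=1$, but the formulation through $J(A)$ seems the most economical.
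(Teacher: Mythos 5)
Your argument is correct. The only point to adjust is that the paper works over $\mathbb{F}\in\{\mathbb{R},\mathbb{C}\}$, so in your description of the norming functionals of $v\in\ell_1^m$ you should take $g_l=\overline{v_l}/|v_l|$ (the conjugate sign) when $v_l\neq 0$ and $g_l$ ranging over the closed unit disc (not just $[-1,1]$) when $v_l=0$; this changes nothing in the structure of the argument. As for the comparison: the paper's proof of the ``moreover'' part is essentially identical to your second paragraph (it exhibits two distinct supporting functionals of $Ae_j$ when a coordinate vanishes), but the first part is handled differently. The paper quotes the characterization of smooth operators from \cite[Ch. 6]{MPS}, by which smoothness of $A$ forces the norm-attainment set $M_A$ to be a single antipodal class $\{\alpha x:\alpha\in\mathbb{T}\}$; it then checks that such an $x$ must be an extreme point $e_j$ of the unit ball of $\ell_1^n$, which yields the first claim, and invokes the further fact that $Ae_j$ is then smooth in $\ell_1^m$. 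You instead bypass the norm-attainment machinery entirely and construct elements of $J(A)$ directly through the isometry $\mathcal{L}(\ell_1^n,\ell_1^m)^*\cong\ell_1^n(\ell_\infty^m)$, so that both halves of the lemma become instances of the single principle that two distinct norming functionals contradict smoothness. Your route is more self-contained (no external citation needed) and arguably more uniform; the paper's route is shorter on the page because it delegates the dual-space analysis to the cited reference.
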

\begin{proof}
	Since $A$ is smooth, by \cite[Ch. 6]{MPS}, $M_{A}=\{\alpha x:\alpha\in \mathbb{T}\}$ for some unit vector  $x$ of $\ell_1^n.$ It is easy to observe that $x\in E_{\ell_1^n},$ for otherwise, if $x=\frac{1}{2}(x_1+x_2)$ for some unit vectors  $x_1,x_2$ of $ \ell_1^n,$ then $\|Ax_1\|_1=\|Ax_2\|_1=\|A\|=1,$ that is $x_1,x_2\in M_{A}.$ Thus $x_1=\alpha_1x, x_2=\alpha_2 x,$ for some $\alpha_1,\alpha_2\in \mathbb{T},$ and so $x=\frac{1}{2}(\alpha_1+\alpha_2)x.$ This clearly implies that $\alpha_1=\alpha_2=1.$  So there exists  $1\leq j\leq n$ such that $M_{A}=\{\alpha e_j:\alpha\in \mathbb{T}\}.$ Therefore, $\|Ae_j\|_1=1$ and $\|Ae_i\|_1<1$ for all $1\leq i~(\neq j)\leq n.$ Moreover, from \cite[Ch. 6]{MPS}, it follows that $Ae_j$ is smooth in $\ell_1^m.$ Note that,  if  $f=(b_1,b_2,\ldots,b_m)\in \ell_\infty^m$ is such that 
	\[b_k=\begin{cases}
	e^{-i\theta_{kj}}, &\text{ if } c_{kj}\neq0, \text{ and }c_{kj}=e^{i\theta_{kj}}|c_{kj}|\\
		\pm 1 ,&\text{ if } c_{kj}=0
	\end{cases},\]
	then $f\in J(Ae_j).$ Since $Ae_j$ is smooth, $J(Ae_j)$ is singleton, and so $c_{kj}\neq 0$ for all $1\leq k\leq m.$ 
\end{proof}

We use the symbols $e_i, \mathbf{e_i}$ to represent the $i$-th standard co-ordinate vectors of  $\mathbb{F}^n$ and $\mathbb{F}^m,$ respectively. 
For $1\leq i\leq m, 1\leq j\leq n,$ suppose $E_{ij}\in \mathcal{L}(\ell_1^n,\ell_1^m)$ is defined as 
\[E_{ij}e_k=\begin{cases}
	\mathbf{e_i},&\text{ if } k=j\\
	0,&\text{ if } 1\leq k\neq j\leq n
\end{cases}.\]

\section{Proof of Theorem \ref{th-01}}\label{sec-proof}
Throughout the subsections \ref{sub-12}, \ref{sub-23}, and \ref{sub-34}, we assume that $p=1.$
\subsection{Proof of  $(1)\Rightarrow(2)$}\label{sub-12}
We prove this implication using  the following lemma.
\begin{lemma}\label{lem-tea1}
	Suppose $T:\mathcal{L}(\ell_1^n,\ell_1^m)\to \mathcal{L}(\ell_1^n,\ell_1^m)$ is a linear map and $T$ preserves TEA pairs. Suppose there exists $A\in \ker(T)$ such that $M_{A}=\{\alpha e_j:\alpha\in \mathbb{T}\}$ for some $1\leq j\leq n.$ Then $T=0.$
\end{lemma}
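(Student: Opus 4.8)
The plan is to show that the existence of a nonzero $A \in \ker(T)$ with $M_A = \{\alpha e_j : \alpha \in \mathbb{T}\}$ forces $T$ to annihilate every $E_{ik}$, hence all of $\mathcal{L}(\ell_1^n,\ell_1^m)$. The starting observation is that if $A$ attains its norm only on $\pm e_j$ (up to unimodular scalars), then for generic perturbations, $A$ and $A + cE_{ik}$ still form a TEA pair whenever adding $cE_{ik}$ does not disturb behaviour at $e_j$: indeed, for $k \neq j$ we have $(A+cE_{ik})e_j = Ae_j$, and the characterization \cite[Th. 2.3]{MSP} lets us detect the TEA relation at the common norm-attaining vector $e_j$, provided $\|A\|$ is still attained there and $Ae_j, (A+cE_{ik})e_j = Ae_j$ trivially form a TEA pair. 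So first I would normalize $\|A\| = 1$, and for each $k \neq j$ and each $i$, observe $A \parallel (A + cE_{ik})$ in the TEA sense for all scalars $c$ (since they agree at $e_j \in M_A \cap M_{A+cE_{ik}}$, using that $\|(A+cE_{ik})e_k\|_1$ can be controlled, and $e_j \in M_{A+cE_{ik}}$ because the $k$-th column is the only one changed while the $j$-th column still has norm $1 = \|A+cE_{ik}\|$ once $|c|$ is… — actually one must be a little careful that $\|A + cE_{ik}\|$ stays equal to $1$; for small $c$ this holds, and for TEA-pair purposes one can rescale).

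The cleaner route: for $k \neq j$, the operators $A$ and $A + cE_{ik}$ agree on $e_j$, so they form a TEA pair for every scalar $c$ with $|c|$ small enough that $\|A+cE_{ik}\| = 1$ (norm attained at $e_j$, where the two operators coincide and trivially satisfy the triangle equality). Since $T$ preserves TEA pairs and $TA = 0$, the pair $0 = TA$ and $T(A + cE_{ik}) = cT(E_{ik})$ must be TEA; but a TEA pair involving the zero vector is automatic, so this gives nothing directly. Hence the real input must come from pairs of operators neither of which maps to zero. So instead I would look at $B_1 = A + cE_{ik}$ and $B_2 = A + dE_{i'k'}$ for appropriate indices: when these share the norm-attaining vector $e_j$ and agree there (i.e. whenever $k, k' \neq j$), they form a TEA pair, so $T(B_1) = cT(E_{ik})$ and $T(B_2) = dT(E_{i'k'})$ form a TEA pair. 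Letting $c, d$ range and using homogeneity of the parallel/TEA relation, this shows $E_{ik}$ and $E_{i'k'}$ map to a parallel (in fact TEA, after sign adjustment) pair for all $i, i'$ and all $k, k' \neq j$. Iterating with different choices, I would aim to show that the image under $T$ of the subspace spanned by $\{E_{ik} : k \neq j, 1 \le i \le m\}$ consists of pairwise parallel operators; then, since all these images are pairwise parallel, invoke a parallelism-dimension bound in the spirit of Lemma \ref{lem-pdim} (applied inside the separable space $\mathcal{L}(\ell_1^n,\ell_1^m)$, or rather to the linear span of the image) to conclude that image is at most one-dimensional, and with a further argument that it must in fact be zero.

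To handle the remaining columns (the index $k = j$) and close the argument, I would exploit that $\ker(T)$ is a subspace: once we know $T(E_{ik}) = 0$ for all $k \neq j$ and all $i$, the operator $A' = A - \sum (\text{$j$-th column terms})$… more precisely, since $A = \sum_{k} E_{\cdot k}$-type decomposition and the non-$j$ columns are in $\ker(T)$, we get that $A$ restricted to its $j$-th column, namely $\sum_i c_{ij} E_{ij}$, also lies in $\ker(T)$. Then I would run a symmetric perturbation argument at a different norm-attaining vector: pick a suitable $B$ (for instance built from a column $k \ne j$) whose unique norm-attaining direction is $e_k$, note $B \in \ker(T)$ already, and use TEA pairs $B + cE_{ij}$ to force $T(E_{ij}) = 0$ as well. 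Once $T(E_{ik}) = 0$ for all $i, k$, linearity gives $T = 0$.

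The main obstacle I anticipate is the bookkeeping around norm attainment: ensuring that the perturbed operators $A + cE_{ik}$ genuinely keep $e_j$ as a (the) norm-attaining vector, so that \cite[Th. 2.3]{MSP} applies and produces an \emph{honest} TEA relation between nonzero images — the degenerate case where one image is zero carries no information, so the perturbations must be chosen to keep both images nonzero while preserving the shared norm-attaining point. Managing the interplay between "small $c$ so the norm is unchanged" and "enough freedom in $c$ to exploit homogeneity and span the relevant subspace" is where the technical care will be concentrated, and it is presumably why the authors flag the overall proof as involved.
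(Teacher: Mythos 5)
Your proposal is essentially the paper's argument: perturb the kernel element $A$ (whose norm is attained only along $e_j$) by operators vanishing on $e_j$, detect the TEA relation at the common norm\nobreakdash-attaining vector $e_j$, and then iterate the same argument from a column $k\neq j$ to kill $T(E_{ij})$. The one step you leave vague --- ``a further argument that it must in fact be zero'' --- is exactly the antipodal pairing you already have at your disposal: for any $B$ with $Be_j=0$, the operators $\eta A+B$ and $\eta A-B$ (the paper scales $A$ up by $\eta>\|B\|/(\|A\|-\xi)$ with $\xi=\max_{i\neq j}\|Ae_i\|_1$, rather than shrinking the perturbation) form a TEA pair, so $T(B)$ and $-T(B)$ do too, giving $0=\|T(B)+(-T(B))\|=2\|T(B)\|$ in one line; this makes your detour through pairwise parallelism and Lemma~\ref{lem-pdim} unnecessary.
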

\begin{proof}
	Let $\mathcal{B}=\{B\in \mathcal{L}(\ell_1^n,\ell_1^m):Be_j=0\}.$ First observe that $T(B)=0$ for all $B\in \mathcal{B}.$ Indeed, assume $\xi=\max\{\|Ae_i\|_1:1\leq i\neq j\leq n\}.$ Then $\xi<\|A\|.$ Now, $\eta A+B$ and $\eta A-B$ form a TEA pair for $\eta >\frac{\|B\|}{\|A\|-\xi}.$ Therefore, $T(\eta A+B)$ and $T(\eta A-B)$ form a TEA pair and so 
	\[0=\|T(\eta A+B)+T(\eta A-B)\|=\|T(B)\|+\|T(-B)\|=2\|T(B)\|,\]
	that is, $T(B)=0.$ In particular, $T(E_{ik})=0$ for all $1\leq i\leq m$ and for all $1\leq k\neq j\leq n.$ Now, repeating the above argument with $E_{ik},$ we get $T(E_{il})=0$ for all $1\leq i\leq m$ and for all $1\leq l\neq k\leq n.$ Therefore, $T(X)=0$ for all $X\in \mathcal{L}(\ell_1^n,\ell_1^m).$
\end{proof}

Now we are ready to prove $(1)\Rightarrow(2)$. 
\begin{proof}
	Since $T$ preserves TEA pairs, it clearly preserves parallel pairs. We only prove that $rank(T)>1.$ 	
If possible, assume that $rank(T)=1.$ 	
Let $S\in \mathcal{L}(\ell_1^n,\ell_1^m)$ be an arbitrary extreme contraction. We claim that $T(S)=0.$ If possible, assume $T(S)\neq 0.$ Choose $e\in E_{\ell_1^m}\setminus\{\mu Se_1:\mu\in \mathbb{T}\}.$ Define $U,V\in \mathcal{L}(\ell_1^n,\ell_1^m)$ as follows.
$$Ue_i=\begin{cases}e,& \text{if } i=1\\
Se_i,& \text{if }2\leq i\leq n 
\end{cases},
\quad Ve_i=\begin{cases}-e,& \text{if } i=1\\
	Se_i,& \text{if }2\leq i\leq n 
\end{cases} .$$
Then $U,V$ are extreme contractions of $ \mathcal{L}(\ell_1^n,\ell_1^m).$ Since $rank(T)=1,$ the sets $\{T(S),T(U)\}$ and $\{T(S),T(V)\}$ are linearly dependent. Let $$\alpha T(S)+\beta T(U)=0, \quad \text{ and }\quad  \gamma T(S)+\delta T(V)=0$$ for some $\alpha, \beta,\gamma,\delta \in \mathbb{F},$ where $(\alpha,\beta)\neq (0,0),$ and $(\gamma,\delta)\neq (0,0).$  If either $\beta=0,$ or $\delta =0,$ then clearly, $T(S)=0.$ So assume $\beta \neq 0, \delta\neq 0.$ Note that, if  $\alpha=\gamma=0,$ then $T(U)=T(V)=0,$ that is $T(U-V)=0.$ By Lemma \ref{lem-tea1}, $T=0,$ a contradiction. So either $\alpha\neq 0$ or $\gamma\neq 0.$ Without loss of generality, assume that  $\alpha\neq0.$ Note that, $$\|\alpha Se_1+\beta Ue_1\|_1=\|\alpha Se_1+\beta e\|_1=|\alpha|+|\beta|, \text{ and}$$ 
\[\|\alpha Se_i+\beta Ue_i\|_1=\|(\alpha +\beta )Se_i\|_1=|\alpha+\beta |, \text{ for } i>1.\] 
Thus, $\|\alpha S+\beta U\|=|\alpha|+|\beta|=\|\alpha S\|+\|\beta U\|.$
Consequently, $\alpha S$ and $\beta U$ form a TEA pair, and so do $\alpha T(S), \beta T(U).$ Therefore, $$0=\|\alpha T(S)+\beta T(U)\|=\|\alpha T(S)\|+\|\beta T(U)\|,$$ which proves that $T(S)=0.$ Since $T(S)=0$ for each extreme contraction $S\in \mathcal{L}(\ell_1^n,\ell_1^m),$ $T=0.$ This contradiction proves that $rank(T)>1.$
\end{proof}

\subsection{Proof of  $(2)\Rightarrow(3)$}\label{sub-23}
Let us first prove the following lemma, which will be used in this subsection.
\begin{lemma}\label{lem-inv}
Suppose $T:\mathcal{L}(\ell_1^n,\ell_1^m)\to \mathcal{L}(\ell_1^n,\ell_1^m)$ is a non-zero linear map and $T$ preserves parallel pairs. Let $A\in \ker(T)$ be such that $\|A\|=1,$ $e_i\in M_A,$ and $Ae_n=0.$  Then $T(X), T(E_{jn})$ are linearly dependent for any $1\leq j\leq m,$ where 
\[Xe_k=
\begin{cases}
	Ae_i,& \text{ if } k=i\\
	0,&\text{ if } 1\leq k\neq i\leq n
\end{cases}.\]
\end{lemma}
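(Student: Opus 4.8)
The plan is to build, from $A$ and $E_{jn}$, a one-parameter family of parallel pairs whose images under $T$ force a linear-dependence relation. Write $i$ for the index with $e_i \in M_A$; note $i \neq n$ since $Ae_n = 0$ but $\|A\| = 1$. The operator $X$ defined in the statement is the ``rank-one piece'' of $A$ supported on $e_i$: it satisfies $Xe_i = Ae_i$ with $\|Xe_i\|_1 = 1$, hence $\|X\| = 1$ and $e_i \in M_X$. The key observation is that $A - X$ kills $e_i$ (and $e_n$), so for a suitable scalar $t$ the operator $X + t(A-X)$ still attains its norm at $e_i$, with value $Xe_i = Ae_i$; meanwhile $E_{jn}$ acts only on $e_n$, where $A - X$ also vanishes. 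First I would show that for every scalar $t$, the pair $X + t(A - X)$ and $E_{jn}$ is parallel: indeed on the unit vector $e_i$ we have $(X + t(A-X))e_i = Ae_i$ and $E_{jn}e_i = 0$, and a zero vector is parallel to anything, but more to the point we can find a genuine norming direction — actually the cleanest route is to perturb in the $e_n$-direction instead. Let me instead consider, for scalars $s,t$, the operator $Y_{s,t} = X + s\,E_{jn} + t(A - X)$; on $e_i$ it equals $Ae_i$ (so $\|Y_{s,t}\| \ge 1$) and on $e_n$ it equals $s\,\mathbf{e_j}$, so for $|s| \le 1$ we get $\|Y_{s,t}\| = 1$ with $e_i \in M_{Y_{s,t}}$ whenever $|t|$ is small relative to $\|Ae_k\|_1$ for $k \neq i$.

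Next I would exploit that $A \in \ker(T)$, so $T(Y_{s,t}) = T(X) + s\,T(E_{jn}) - t\,T(X) = (1-t)T(X) + s\,T(E_{jn})$, i.e. the whole two-parameter family collapses under $T$ into the span of $T(X)$ and $T(E_{jn})$. Now the main step: produce two \emph{different} lines in this span that are forced to be parallel to a common vector. Concretely, for $s = 1$ the pair $Y_{1,0} = X + E_{jn}$ and $Y_{-1,0} = X - E_{jn}$ is a TEA/parallel pair (on $e_i$ both equal $Ae_i$, giving $\|X + E_{jn} + (X - E_{jn})\| = \|2X\| = 2 = \|X+E_{jn}\| + \|X - E_{jn}\|$), so $T(X) + T(E_{jn})$ is parallel to $T(X) - T(E_{jn})$. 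Separately, $X + E_{jn}$ is parallel to $X$ itself (again both attain norm at $e_i$ with value $Ae_i$), and $X - E_{jn}$ is parallel to $X$. Chaining these parallelisms — $T(X) \parallel T(X) + T(E_{jn})$, $T(X) \parallel T(X) - T(E_{jn})$, and $T(X)+T(E_{jn}) \parallel T(X) - T(E_{jn})$ — together with the characterization of parallelism in $\mathcal{L}(\ell_1^n,\ell_1^m)$ (Theorem 2.?, the $M_A \cap M_B$ criterion), pins down a common norming vector $z$ and common norming functional, and I would argue that the three vectors $T(X)+T(E_{jn})$, $T(X)-T(E_{jn})$, $2T(X)$ all lie in a single extreme ray of the unit ball up to scaling, whence $T(X)$ and $T(E_{jn})$ are linearly dependent.

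The step I expect to be the main obstacle is the last one: deducing actual linear dependence of $T(X)$ and $T(E_{jn})$ from the collection of parallelism relations. Parallelism is strictly weaker than linear dependence, so merely knowing $u \parallel v$, $u \parallel w$, $v \parallel w$ for $u = T(X)$, $v = T(X)+T(E_{jn})$, $w = T(X) - T(E_{jn})$ is not enough in general. The resolution should come from the rigidity of the $\ell_1^m$-norm: using the characterization, all three pairs norm at a common $x \in \ell_1^n$, and then $T(X)x$, $(T(X)+T(E_{jn}))x$, $(T(X)-T(E_{jn}))x$ must be pairwise parallel vectors \emph{in $\ell_1^m$} with the middle one being $2T(X)x$ minus/plus a fixed vector; a short case analysis on the support patterns and sign patterns in $\ell_1^m$ (parallelism in $\ell_1^m$ forces agreement of signs on the overlap of supports) forces $T(E_{jn})x$ to be a scalar multiple of $T(X)x$, and then — since this holds for the full family $Y_{s,t}$ and the norming set is robust under the perturbations — one upgrades ``proportional at $x$'' to ``proportional as operators.'' I would present this final case analysis carefully, as it is where the $p=1$ geometry does the real work; everything before it is soft manipulation of the parallel-pair and kernel hypotheses.
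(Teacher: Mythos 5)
Your proposal has a genuine gap at exactly the point you yourself flag as ``the main obstacle,'' and the resolution you sketch does not close it. You derive finitely many parallelism relations --- $T(X)\parallel T(X)\pm T(E_{jn})$ and $T(X)+T(E_{jn})\parallel T(X)-T(E_{jn})$ --- and then hope that a support/sign case analysis in $\ell_1^m$ upgrades these to linear dependence. But pairwise parallelism of finitely many vectors never implies linear dependence in these spaces: in $\ell_1^m$ every extreme point of the unit ball is parallel to \emph{every} vector (e.g. $(1,0)\parallel(0,1)$ in $\ell_1^2$), and correspondingly two operators sharing a norming coordinate at which one of them takes an extreme value are automatically parallel. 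So your three relations are entirely compatible with $T(X)$ and $T(E_{jn})$ being linearly independent, and no analysis of the images at a single common norming vector can rule that out. Note also that $X$ and $E_{jn}$ are themselves \emph{not} parallel ($M_X=\{\alpha e_i:\alpha\in\mathbb{T}\}$ while $M_{E_{jn}}=\{\alpha e_n:\alpha\in\mathbb{T}\}$ and $i\neq n$), so nothing in your list of soft parallelisms even forces $T(X)\parallel T(E_{jn})$; the relations you do establish never use $A\in\ker(T)$, which is a sign that they cannot suffice.

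The paper proves the much stronger statement that \emph{every} pair $Y=\alpha X+\beta E_{jn}$, $Z=\gamma X+\delta E_{jn}$ in the two-dimensional span satisfies $T(Y)\parallel T(Z)$, and then invokes Lemma \ref{lem-pdim} (a subspace of a separable Banach space in which all pairs are parallel has dimension at most one --- proved via a smooth point, not a finite check). The kernel element $A$ is used precisely on the pairs $Y,Z$ that are \emph{not} parallel: after normalizing to $Y=X+\beta E_{jn}$, $Z=\gamma X+E_{jn}$ with $|\beta|,|\gamma|<1$ and $\gamma\neq 0$, one replaces $Z$ by $\xi\gamma A+Z$ with $\xi>1/|\gamma|$; since $Ae_n=0$ and $e_i\in M_A$, this perturbation moves the norming set of $Z$ from $e_n$ back to $e_i$, giving $\xi\gamma A+Z\parallel Y$ while $T(\xi\gamma A+Z)=T(Z)$. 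The case $\gamma=0$ follows by letting $\gamma\to 0$. Your two-parameter family $Y_{s,t}$ points in the right direction, but you never carry out this step, and without it --- and without Lemma \ref{lem-pdim} or some substitute that consumes the full two-parameter family of parallel pairs --- the conclusion does not follow.
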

\begin{proof}
	 Let $1\leq j\leq m.$ Suppose $Y,Z\in span\{X,E_{jn}\},$ and $Y=\alpha X+\beta E_{jn},~ Z=\gamma X+\delta E_{jn}$ for some $\alpha,\beta,\gamma,\delta\in \mathbb{F}.$ We claim that $T(Y)\parallel T(Z).$ If $Y\parallel Z,$ then this is indeed the case. Suppose $Y$ is not parallel to $Z.$ 
	Then either $\frac{|\beta|}{|\alpha|}<1$ and $\frac{|\gamma|}{|\delta|}<1,$ or $\frac{|\alpha|}{|\beta|}<1$ and $\frac{|\delta|}{|\gamma|}<1.$ Without loss of generality, assume 
	 \begin{equation*}\label{eq-invlem}
	 	Y=X+\beta E_{jn}, \quad \text{and} \quad Z=\gamma X+E_{jn},~\text{ where } |\beta|<1,|\gamma|<1.
	 \end{equation*}   
 First suppose that $\gamma\neq 0.$ Choose $\xi>\frac{1}{|\gamma|}.$ Since $e_i\in M_A$ and $Ae_n=0,$  $i\neq n.$ Now,
 \[\|(\xi \gamma A+Z)e_i\|_1=\|(\xi\gamma+\gamma)Ae_i+E_{jn}e_i\|_1=|\gamma|(\xi+1),\] whereas
 $\|(\xi\gamma A+Z)e_n\|_1=\|E_{jn}e_n\|_1=1,$ and for $1\leq k\neq i\leq n-1,$
 \[\|(\xi\gamma A+Z)e_k\|_1=\|\xi\gamma Ae_k+E_{jn}e_k\|_1\leq |\gamma|\xi\|Ae_k\|_1\leq |\gamma|\xi.\]
 Since $|\gamma|(\xi+1)>|\gamma|\xi>1,$ we get $e_i\in M_{\xi\gamma A+Z}\cap M_Y.$ Moreover, $(\xi\gamma A+Z)e_i\parallel Ye_i.$ Therefore,
 \begin{eqnarray*}
 \xi\gamma A+Z \parallel Y\quad \Rightarrow \quad T(\xi \gamma A+Z)\parallel T(Y)\quad \Rightarrow\quad T(Z)\parallel T(Y).
 	\end{eqnarray*}
 Now, assume $\gamma=0.$ By the previous argument, we have 
 \begin{eqnarray*}
 &&	T(Y)\parallel T(\epsilon X+E_{jn}),\quad \text{ for all } 0<|\epsilon|<1\\
 	&\Rightarrow &T(Y)\parallel T(E_{jn}), \quad \text{letting } \epsilon \to 0\\
 	&\Rightarrow &T(Y)\parallel T(Z).
 	\end{eqnarray*}
 Thus in each case $T(Y)\parallel T(Z),$ that is, any two vectors of $span\{T(X),T(E_{jn})\}$ are parallel. Hence by Lemma \ref{lem-pdim}, $T(X),T(E_{jn})$ are linearly dependent.
\end{proof}

Next we prove the implication $(2)\Rightarrow (3).$

\begin{proof}
If possible, assume $T$ is not invertible. Then there exists $A\neq 0$ such that $T(A)=0.$ 
Let $k$ be the smallest positive integer such that $T(A)=0,$ $\|A\|=1$ and  $M_A\cap E_{\ell_1^n}$ contains exactly $k$ linearly independent vectors. Without loss of generality, assume that  $M_A\cap E_{\ell_1^n}=\{\xi e_i:1\leq i\leq k,\xi \in \mathbb{T}\}.$ \\

\textbf{Step 1:} We prove that $k\leq 2.$ \\
If $n=2,$ then this is trivially true. So assume $n\geq 3.$ If possible, let $k\geq 3.$ Define $U,V\in \mathcal{L}(\ell_1^n,\ell_1^m)$ as follows:
$$Ue_i=\begin{cases}Ae_1,& \text{if }i=1 \\
	0,& \text{if } 2\leq i\leq n\end{cases}, \quad Ve_i=\begin{cases}Ae_2,& \text{if } i=2\\
	0,& \text{if } 1\leq i\neq 2\leq n\end{cases} .$$
Let $B,C\in span\{U,V\}.$ Then $B=\alpha U+\beta V,~ C=\gamma U+\delta V$ for some $\alpha,\beta,\gamma,\delta\in \mathbb{F}.$
 If $B\parallel C,$ then $T(B)\parallel T(C).$ Suppose $B$ is not parallel $ C.$ 
 Without loss of generality, assume 
 \begin{equation}\label{eq-inv}
 B=U+\beta V, \quad \text{and} \quad C=\gamma U+V,~\text{ where } |\beta|<1,|\gamma|<1.
 \end{equation} Consider $$D=B-\frac{1+\beta}{2}A,\quad  \text{and}\quad   E=C-\frac{1+\gamma}{2}A.$$
If $\Re(\beta)\leq0,$ then  $e_2\in M_D\cap M_C,$ and $De_2\parallel Ce_2.$ Therefore, $D\parallel C,$ which implies that $T(D)\parallel T(C),$ that is, $T(B)\parallel T(C).$ Similarly, if $\Re(\gamma)\leq0,$ then since $e_1\in M_B\cap M_{E},$ and $Be_1\parallel Ee_1,$
\[B\parallel E \quad \Rightarrow \quad T(B)\parallel T(E)\quad \Rightarrow \quad T(B)\parallel T(C).\]
If $\Re(\beta)>0,\Re(\gamma)>0,$ then $e_3\in M_D\cap M_E,$ and $De_3\parallel Ee_3.$ Therefore,
\[D\parallel E\quad \Rightarrow \quad T(D)\parallel T(E)\quad \Rightarrow \quad T(B)\parallel T(C).\] 
Thus in each case $T(B)\parallel T(C),$ that is, any two elements of $span\{T(U),T(V)\}$ are parallel, and so by Lemma \ref{lem-pdim}, $T(U)$ and $T(V)$ are linearly dependent. Let $T(\mu U+\nu V)=0,$ for some  scalars $\mu,\nu.$ Clearly $M_{\mu U+\nu V}\cap E_{\ell_1^n}\subseteq \{\xi e_1,\xi e_2:\xi\in \mathbb{T}\},$ which contradicts the minimality of $k.$ Therefore $k\leq 2.$\\

\textbf{Step 2:} We prove that $k=1.$\\
From Step 1, $k\leq 2.$ If possible, assume that $k=2,$ that is, $\|Ae_1\|_1=\|Ae_2\|_1=1.$ We consider the cases $n=2$ and $n>2$ separately. In each case, we establish a contradiction.\\

\textbf{Case 2a:} Suppose $n=2.$  Define $P\in \mathcal{L}(\ell_1^2,\ell_1^m)$ as follows:
$$Pe_1=Ae_1, Pe_2=0.$$  Suppose $R,N\in span\{P,E_{12}\}.$ We first show that $T(R)\parallel T(N).$ If $R\parallel N,$ then $T(R)\parallel T(N).$ Suppose $R$ is not parallel to $N.$ Then as in Step 1,  we may assume $R=P+\beta E_{12},$ and $N=\gamma P+E_{12},$ where $\beta,\gamma\in \mathbb{F},$ and   $|\beta|<1,|\gamma|<1.$ Now $(A-R)e_1=0,$ and $(A-R)e_2=Ae_2-\beta \mathbf{e_1}\neq 0,$ since $\|Ae_2\|_1>\|\beta \mathbf{e_1}\|_1 .$ Thus $e_2\in M_{A-R}\cap M_{N},$ and $(A-R)e_2\parallel Ne_2.$ Consequently,
\[A-R\parallel N\quad \Rightarrow \quad T(A-R)\parallel T(N)\quad \Rightarrow \quad T(R)\parallel T(N).\] 
Therefore, any two elements of $span\{T(P),T(E_{12})\}$ are parallel, and so by Lemma \ref{lem-pdim}, $T(P), T(E_{12})$ are linearly dependent. If either $T(P)=0,$ or $T(E_{12})=0,$ then minimality of $k$ is contradicted. Therefore, $T(P)=\xi T(E_{12})$ for some non-zero scalar $\xi .$ Similarly, we get $T(P)=\eta T(E_{22})$ for some non-zero scalar $\eta .$ Thus, $T(\xi E_{12}-\eta E_{22})=0.$ Now, $M_{\xi E_{12}-\eta E_{22}}=\{\alpha e_2:\alpha\in \mathbb{T}\}$ contradicts the minimality of $k.$ \\

\textbf{Case 2b:} Suppose $n>2.$ Define $B,C$ as in Step 1. We claim that $T(B)\parallel T(C).$ If $B\parallel C,$ then this is trivially true. Suppose $B$ is not parallel to C. Then we may assume \eqref{eq-inv}. Proceeding similarly as Step 1, we get $T(B)\parallel T(C),$ if either $\Re(\beta)\leq 0$ or $\Re(\gamma)\leq0.$ Now, let $\Re(\beta)>0,\Re(\gamma)>0.$ Since $k=2,$ $$1>\max\{\|Ae_i\|_1:3\leq i\leq n\}.$$ Choose $r\in \mathbb{N}$ such that $$(1-\frac{\Re(\beta)}{r})>\max\{\|Ae_i\|_1:3\leq i\leq n\}.$$ It is easy to observe that $e_2\in M_{A-\frac{1}{r}B}\cap M_C,$ and $(A-\frac{1}{r}B)e_2\parallel Ce_2.$ Therefore, 
\[(A-\frac{1}{r}B)\parallel C\quad \Rightarrow \quad T(A-\frac{1}{r}B)\parallel T(C)\quad \Rightarrow \quad  T(B)\parallel T(C).\]
Now, as in Step 1, we conclude that there exist non-zero scalars $\mu,\nu$ such that $T(\mu U+\nu V)=0.$ Moreover, we may assume $\|\mu U+\nu V\|=1.$ Let $Q=\mu U+\nu V.$ Then $Qe_j=0,$ for all $3\leq j\leq n,$ and $e_i\in M_{Q}$ for some $i\in \{1,2\}.$ If $e_1\in M_Q,$ then define
\[Xe_j=
\begin{cases}
Qe_1,& \text{ if } j=1\\
0,&\text{ if } 2\leq j\leq n
\end{cases}.\]
If $e_2\in M_Q,$ then define 
\[Xe_j=\begin{cases}Qe_2,& \text{ if } j=2\\
	0,&\text{ if } 1\leq j\neq 2\leq n
\end{cases}.\]
From Lemma \ref{lem-inv}, it follows that $T(X),T(E_{jn})$ are linearly dependent for all $1\leq j\leq m.$
If either $T(X)=0$ or $T(E_{jn})=0$ for some $1\leq j\leq m,$ then the minimality of $k$ is contradicted. Therefore, assume $T(X)=\mu T(E_{1n})=\nu T(E_{2n})$ for some non-zero scalars $\mu,\nu.$ This yields that $T(\mu E_{1n}-\nu E_{2n})=0,$ which again contradicts the minimality of $k.$\\

\textbf{Step 3:} Suppose $\mathcal{V}=\{B\in \mathcal{L}(\ell_1^n,\ell_1^m):Be_1=0\}.$ We show that $dim(T(\mathcal{V}))\leq1.$ Let $B,C\in \mathcal{V}.$ Since $k=1,$ $\|Ae_1\|_1>\|Ae_i\|_1$ for all $2\leq i\leq n.$ Let $$\xi>\max\bigg\{\frac{\|Be_i\|_1}{\|Ae_1\|_1-\|Ae_i\|_1},\frac{\|Ce_i\|_1}{\|Ae_1\|_1-\|Ae_i\|_1}:2\leq i\leq n\bigg\}.$$
Then $e_1\in M_{\xi A+B}\cap M_{\xi A+C}$ and $(\xi A+B)e_1\parallel (\xi A+C)e_1.$ Therefore,
\[(\xi A+B)\parallel (\xi A+C)\quad \Rightarrow \quad T(\xi A+B)\parallel T(\xi A+C)\quad \Rightarrow \quad T(B)\parallel T(C).\]
Since any two elements of $T(\mathcal{V})$ are parallel, using  Lemma \ref{lem-pdim} we get $\dim(T(\mathcal{V}))\leq 1.$ \\

Suppose $\dim(T(\mathcal{V}))=1.$ Let  $T(\mathcal{V})=span\{T(E_{ij})\}$ for some $2\leq j\leq n,1\leq i\leq m,$ where $T(E_{ij})\neq0.$ Choose $E_{1r},E_{2r}\in \mathcal{V},$ such that $r\neq j.$ Then there exist scalars $\mu,\nu$ such that $T(\mu E_{1r}-\nu E_{2r})=0,$ and $(\mu,\nu)\neq (0,0).$ Now, repeating the argument of this step with $\mu E_{1r}-\nu E_{2r}$ in place of $A,$ we get $\dim(T(\mathcal{W}))\leq 1,$ where 
$$\mathcal{W}=\{B\in \mathcal{L}(\ell_1^n,\ell_1^m):Be_r=0\}.$$ 
Since for each $1\leq s\leq m,$ $E_{s1},E_{ij}\in \mathcal{W},$ we have $T(E_{s1})\in span\{T(E_{ij})\}.$ Thus, $T(C)\in span \{T(E_{ij})\}$ for all $C\in \mathcal{L}(\ell_1^n,\ell_1^m),$ which contradicts that $rank(T)>1.$\\

Now, suppose $\dim(T(\mathcal{V}))<1.$ Then $T(E_{ij})=0$ for all $2\leq j\leq n,1\leq i\leq m.$ Now repeating the argument of this step with $E_{12}$ in place of $A,$ we get $\dim(T(\mathcal{W}_1))\leq 1,$ where $$\mathcal{W}_1=\{B\in \mathcal{L}(\ell_1^n,\ell_1^m):Be_2=0\}.$$ 
Since, $T(C)\in T(\mathcal{W}_1)$ for all $C\in \mathcal{L}(\ell_1^n,\ell_1^m),$ we again get a contradiction that $rank(T)>1.$\\

\noindent Thus if $T$ preserves parallel pairs and $rank(T)>1,$ then $T$ must be invertible.

\end{proof}

\subsection{Proof of  $(3)\Rightarrow (4)$}\label{sub-34} To prove this implication, we need several technical  lemmas. 
Throughout this subsection,  we assume that $T:\mathcal{L}(\ell_1^n,\ell_1^m)\to \mathcal{L}(\ell_1^n,\ell_1^m)$ is an invertible linear map and $T$ preserves parallel pairs. 
Observe that if $T$ attains norm at each smooth unit vector of $\mathcal{L}(\ell_1^n,\ell_1^m),$ then since the smooth vectors are dense in $\mathcal{L}(\ell_1^n,\ell_1^m),$ it follows that $T$ attains norm at each unit vector, and so $T$ is a scalar multiple of an isometry. Therefore, our main goal here is to show that $T$ attains its norm at each smooth unit vector of $\mathcal{L}(\ell_1^n,\ell_1^m).$ To accomplish the goal, we first show that $T$ attains its norm at each extreme contraction of $\mathcal{L}(\ell_1^n,\ell_1^m).$
\begin{lemma}\label{lem-01}
	For each extreme contraction $S$ of $\mathcal{L}(\ell_1^n,\ell_1^m),$ $\frac{1}{\|T(S)\|}T(S)$ is an extreme contraction.
\end{lemma}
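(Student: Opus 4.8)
The plan is to show that if $S$ is an extreme contraction but $\frac{1}{\|T(S)\|}T(S)$ is not, then we can produce a subspace of $\mathcal{L}(\ell_1^n,\ell_1^m)$ on which every pair of vectors is parallel yet whose image under $T$ has dimension at least $2$, contradicting Lemma \ref{lem-pdim} together with invertibility of $T$. Concretely, recall (from the lemma at the start of Section \ref{sec-pre}) that $S$ is an extreme contraction exactly when $S(E_{\ell_1^n})\subset E_{\ell_1^m}$; writing $Se_i$ for the images of the standard basis vectors, each $Se_i$ is an extreme point of the unit ball of $\ell_1^m$, i.e. of the form $\pm\mathbf{e}_{k(i)}$ in the real case (or a unimodular multiple of a coordinate vector in the complex case). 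I would then exploit the fact that, because $n,m>1$, there is room to perturb $S$ in a single column while staying among extreme contractions: fix a column index, say $1$, pick $e\in E_{\ell_1^m}$ with $e\notin\{\mu Se_1:\mu\in\mathbb{T}\}$, and define $U,V$ replacing the first column of $S$ by $e$ and $-e$ respectively, leaving the other columns equal to those of $S$. These $U,V$ are again extreme contractions, exactly as in the proof of $(1)\Rightarrow(2)$.

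The key computation is that $\alpha S+\beta U$ (and likewise with $V$) has operator norm $|\alpha|+|\beta|$ for all scalars $\alpha,\beta$: on the first basis vector $\|\alpha Se_1+\beta e\|_1=|\alpha|+|\beta|$ because $Se_1$ and $e$ are distinct extreme points of the $\ell_1^m$ ball with disjoint support (so the $\ell_1$ norms add), and on the remaining basis vectors $\|(\alpha+\beta)Se_i\|_1=|\alpha+\beta|\le|\alpha|+|\beta|$. Hence $\alpha S$ and $\beta U$ form a TEA pair, in particular $S\parallel U$ and $S\parallel V$, and more generally any two elements of $\mathrm{span}\{S,U\}$ — and of $\mathrm{span}\{S,U,V\}$, since $U-V=2E'$ where $E'$ has $e$ in the first column and zeros elsewhere, and one checks $\|\alpha S+\beta U+\gamma E'\|$ behaves the same way — are parallel. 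Since $T$ preserves parallel pairs, every pair of vectors in $T(\mathrm{span}\{S,U,V\})$ is parallel, so by Lemma \ref{lem-pdim} this image has dimension $\le 1$; but $T$ is invertible and $\dim\mathrm{span}\{S,U,V\}\ge 2$ (indeed $U\ne V$ and $S$ need not lie on their line), forcing $\dim T(\mathrm{span}\{S,U,V\})\ge 2$, a contradiction. Therefore no such $U,V$ obstruction can exist, which pins down the structure of $T(S)$.

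From that rigidity I would extract the conclusion as follows. Knowing that for every extreme contraction $S$ and every admissible column perturbation the span stays "totally parallel," one deduces that $T$ cannot decrease the norm at $S$ below $\|T\|$ in a way that would leave $\frac{1}{\|T(S)\|}T(S)$ strictly inside the unit ball after normalization: if $\frac{1}{\|T(S)\|}T(S)$ were a proper convex combination $\frac12(W_1+W_2)$ of contractions, one uses the parallel relations transported through $T$, applied to the preimages under the (invertible) $T$ of $W_1,W_2$, to contradict Lemma \ref{lem-pdim} once more. The main obstacle I anticipate is the bookkeeping in the complex case — ensuring the chosen $e$ can always be taken with support disjoint from (a unimodular multiple of) $Se_1$, which needs $m\ge 2$, and handling the phases so that the operator-norm identity $\|\alpha S+\beta U\|=|\alpha|+|\beta|$ really holds — together with correctly identifying which span of perturbed extreme contractions to feed into Lemma \ref{lem-pdim} so that invertibility of $T$ yields the dimension contradiction. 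Once the norm-additivity identity is established, the rest is a clean application of the parallel-pair-preservation hypothesis and Lemma \ref{lem-pdim}.
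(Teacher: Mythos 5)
Your central claim --- that any two elements of $\mathrm{span}\{S,U\}$ (let alone $\mathrm{span}\{S,U,V\}$) are parallel --- is false, and the whole argument hinges on it. Knowing that $\alpha S$ and $\beta U$ form a TEA pair for every choice of scalars only gives $S\parallel U$; it says nothing about pairs such as $S+U$ and $S-U$. Concretely, with your own $U$: since $Se_1$ and $e$ are distinct (non-proportional) extreme points of the $\ell_1^m$-ball they have disjoint supports, so $\|(Se_1+e)+\lambda(Se_1-e)\|_1=|1+\lambda|+|1-\lambda|\le 2\sqrt{2}<4$ for every $\lambda\in\mathbb{T}$, while $M_{S-U}=\{\alpha e_1:\alpha\in\mathbb{T}\}$, so the parallelism cannot be witnessed at any other coordinate either; hence $S+U\not\parallel S-U$. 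In fact your claim is self-refuting: a subspace of dimension $\ge 2$ in which every pair is parallel cannot exist in \emph{any} Banach space, by Lemma \ref{lem-pdim} applied to that subspace directly, with no $T$ involved. Relatedly, the contradiction you derive never uses the hypothesis that $\tfrac{1}{\|T(S)\|}T(S)$ fails to be extreme, so if the argument worked it would rule out every invertible parallelism preserver, including the identity. The final paragraph (``if $\tfrac{1}{\|T(S)\|}T(S)$ were a proper convex combination $\tfrac12(W_1+W_2)$ of contractions, one uses the parallel relations transported through $T$ \dots'') exhibits no concrete parallel pair and derives no actual contradiction, so the conclusion is never reached.

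The paper's proof rests on a different and much more economical observation: an extreme contraction $S$ is parallel to \emph{every} $A\in\mathcal{L}(\ell_1^n,\ell_1^m)$, because at any $e_j\in M_A$ the vector $Se_j$ is an extreme point of the unit ball of $\ell_1^m$, and such points are parallel to all of $\ell_1^m$. Combined with invertibility of $T$, this makes $T(S)$ parallel to everything in $\mathcal{L}(\ell_1^n,\ell_1^m)$. Testing this against $E_{1i}$, whose norm is attained only at $\pm e_i$ (up to unimodular scalars), forces $\|T(S)e_i\|_1=\|T(S)\|$ for each $i$; testing it against the rank-one operator whose $i$-th column is $T(S)e_i$ with one coordinate's sign flipped forces each column of $T(S)$ to be supported on a single coordinate, i.e.\ to be an extreme point after normalization. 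Your column-perturbation construction is genuinely useful later (it is essentially what drives Lemmas \ref{lem-md} and \ref{lem-04}), but it cannot carry this lemma on its own.
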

\begin{proof}
Suppose $S$ is an extreme contraction of $\mathcal{L}(\ell_1^n,\ell_1^m).$ Without loss of generality, assume that $\|T(S)\|=1.$ 
 We first prove  that $\|T(S)e_i\|_1=1$ for all $1\leq i\leq n.$	Let $A\in \mathcal{L}(\ell_1^n,\ell_1^m)$ and $e_j\in M_A$ for some $1\leq j\leq n.$ Since $S$ is an extreme contraction, $Se_j\in E_{\ell_1^m}.$ Moreover, each vector of $E_{\ell_1^m}$ is parallel to all vectors of $\ell_1^m.$ Therefore $Se_j\| Ae_j,$ and so $S\parallel A,$ which implies that $T(S)\parallel T(A)$ for all $A\in \mathcal{L}(\ell_1^n,\ell_1^m).$ Since $T$ is invertible, there exists $A\in \mathcal{L}(\ell_1^n,\ell_1^m)$ such that $T(A)=E_{1i}.$  Clearly $M_{E_{1i}}=\{\alpha e_i:\alpha\in \mathbb{T}\}.$ Now, $T(S)\parallel E_{1i}$ implies that $M_{T(S)}\cap M_{E_{1i}}\neq \emptyset,$ and so $$1=\|T(S)\|=\|T(S)e_i\|_1.$$
Next we show that $T(S)e_i\in E_{\ell_1^m}$ for all $i.$ If possible, assume $T(S)e_i\notin E_{\ell_1^m}.$ Without loss of generality, assume $c_{1i}\neq 0$ and $c_{2i}\neq 0,$ where $T(S)e_i=(c_{1i},\ldots,c_{mi}).$ Consider $A\in \mathcal{L}(\ell_1^n,\ell_1^m),$ where $Ae_i=(-c_{1i},c_{2i}\ldots,c_{mi})$ and $Ae_k=0$ for all $1\leq k\neq i\leq n.$ Clearly $M_A=\{\alpha e_i:\alpha\in \mathbb{T}\}.$ Now, 
\[T(S)\parallel A \quad \Rightarrow \quad T(S)e_i\parallel Ae_i.\] Let $\lambda\in \mathbb{T} $ be  such that  $\|T(S)e_i+\lambda Ae_i\|_1=\|T(S)e_i\|_1+\|Ae_i\|_1.$ Then 
\[|1-\lambda||c_{1i}|+\sum_{j=2}^m|1+\lambda||c_{ji}|=2,\] which implies that $|1-\lambda|=|1+\lambda|=2.$ This contradiction proves that $T(S)e_i\in E_{\ell_1^m}.$ Therefore, $T(S)$ is an extreme contraction. 
\end{proof}

\begin{lemma}\label{lem-07}
	Suppose $A,B$ are extreme contractions of $\mathcal{L}(\ell_1^n,\ell_1^m)$ satisfying the following.
	\begin{enumerate}[\rm(i)]
		\item $Ae_i=Be_i$ for all $i\in J,$ where $J\subset \{1,\ldots,n\},$ and $J$ contains $n-1$ elements.
		\item $\{Ae_i,Be_i\}$ is linearly independent if $i\notin J.$
		\item $\frac{1}{\|T(A)\|}T(A)e_j=\mu_j\frac{1}{\|T(B)\|}T(B)e_j$  for all $1\leq j\leq n,$ where $\mu_j\in \mathbb{T}.$
	\end{enumerate}
Then $\Re(\mu_k)=\Re(\mu_j)$ for all $1\leq j, k\leq n.$
\end{lemma}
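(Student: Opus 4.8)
The plan is to prove the statement in the equivalent form that $C+D$ attains its norm at \emph{every} coordinate vector, where $C:=\frac{1}{\|T(A)\|}T(A)$ and $D:=\frac{1}{\|T(B)\|}T(B)$. By Lemma \ref{lem-01}, $C$ and $D$ are extreme contractions, so one may write $De_j=\xi_j\mathbf{e}_{s_j}$ with $\xi_j\in\mathbb{T}$ and $1\le s_j\le m$, and then hypothesis $(iii)$ becomes $Ce_j=\mu_j\xi_j\mathbf{e}_{s_j}$. Consequently $\|(C+D)e_j\|_1=|1+\mu_j|=\sqrt{2+2\Re(\mu_j)}$, while $\|C+D\|=\max_{1\le j\le n}\|(C+D)e_j\|_1$. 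Hence $\Re(\mu_1)=\cdots=\Re(\mu_n)$ holds if and only if $\{e_1,\dots,e_n\}\subseteq M_{C+D}$, and it is this last assertion that I will establish.

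Next I produce the key operator. Let $i_0$ be the unique element of $\{1,\dots,n\}\setminus J$, set $F:=\frac{1}{\|T(A)\|}A+\frac{1}{\|T(B)\|}B$ (so $T(F)=C+D$), and write $c:=\frac{1}{\|T(A)\|}+\frac{1}{\|T(B)\|}$. By $(i)$ we have $Fe_i=c\,Ae_i$ for $i\in J$; and by $(ii)$ the vectors $Ae_{i_0},Be_{i_0}$ are two linearly independent extreme points of $\ell_1^m$, hence unimodular multiples of two distinct coordinate vectors, so $Fe_{i_0}$ is supported on exactly two coordinates with $\|Fe_{i_0}\|_1=c$. Thus $\|Fe_i\|_1=c$ for every $i$, so $\|F\|=c$ and $\{e_1,\dots,e_n\}\subseteq M_F$. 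Moreover for $i\in J$ the vector $Fe_i=c\,Ae_i$ is a scalar multiple of an extreme point of $\ell_1^m$ and is therefore parallel to every vector of $\ell_1^m$; using the characterization of parallel pairs of $\mathcal{L}(\ell_1^n,\ell_1^m)$ it follows that $F\parallel X$ for every $X\in\mathcal{L}(\ell_1^n,\ell_1^m)$ that attains its norm at some $e_i$ with $i\in J$.

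The main step is a dimension count. Suppose, for contradiction, that some $e_{l_0}\notin M_{C+D}$. For the $m$-dimensional column subspace $\mathcal{C}_{l_0}:=\{Y\in\mathcal{L}(\ell_1^n,\ell_1^m):Ye_l=0 \text{ for all } l\neq l_0\}$, every nonzero $Y\in\mathcal{C}_{l_0}$ has only the $l_0$-th column nonzero, so $M_Y=\{\alpha e_{l_0}:\alpha\in\mathbb{T}\}$, which is disjoint from $M_{C+D}$; hence $C+D\not\parallel Y$. Since $T$ preserves parallel pairs and $T(F)=C+D$, the operator $F$ is parallel to no nonzero element of the $m$-dimensional subspace $V:=T^{-1}(\mathcal{C}_{l_0})$. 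By the previous paragraph such an $X\in V\setminus\{0\}$ can attain its norm at no $e_i$ with $i\in J$; since $\|X\|=\max_l\|Xe_l\|_1$ and $|J|=n-1$, this forces $M_X=\{\alpha e_{i_0}:\alpha\in\mathbb{T}\}$. Now $e_{i_0}\in M_F\cap M_X$, so $F\not\parallel X$ forces $Fe_{i_0}\not\parallel Xe_{i_0}$ in $\ell_1^m$. Writing $Fe_{i_0}=a\mathbf{e}_u+b\mathbf{e}_{u'}$ with $a,b\neq 0$ and $u\neq u'$, one checks directly that any vector of $\ell_1^m$ whose $u$-th coordinate vanishes is parallel to $Fe_{i_0}$; therefore $(Xe_{i_0})_u\neq 0$ for every nonzero $X\in V$. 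Thus the linear map $X\mapsto (Xe_{i_0})_u$ is injective on the $m$-dimensional space $V$, which is impossible since $m\ge 2$. This contradiction gives $\{e_1,\dots,e_n\}\subseteq M_{C+D}$, so $\Re(\mu_j)$ is independent of $j$.

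I expect the reduction itself to be the only real obstacle: recognising that the asserted equality of real parts is precisely the statement that $T(F)=C+D$ attains its norm at every $e_j$ for the particular operator $F$ above, and that this can be forced by probing $C+D$ against the column subspaces $\mathcal{C}_{l_0}$ and counting dimensions inside $T^{-1}(\mathcal{C}_{l_0})$. The supporting facts — that $M_X=\{\alpha e_{i_0}:\alpha\in\mathbb{T}\}$ for nonzero $X\in V$ (where the hypothesis $|J|=n-1$ is essential), that $\|X\|=\max_l\|Xe_l\|_1$ for operators between $\ell_1$-spaces, and that zeroing one coordinate of a two-coordinate vector of $\ell_1^m$ yields a vector parallel to a given two-coordinate vector — are all routine verifications.
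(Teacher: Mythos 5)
Your proof is correct and follows essentially the same route as the paper's: both reduce the claim to showing that (a normalization of) $T(A+B)$ attains its norm at every coordinate vector $e_j$, then pull the $m$-dimensional column space at a putative bad index back through $T^{-1}$, force each nonzero element of that preimage to attain its norm only at the exceptional index $i_0\notin J$, and reach a contradiction by annihilating the relevant coordinate of the $i_0$-th column with a suitable linear combination. Your dimension-count phrasing of that last step, and your keeping the bad index $l_0$ distinct from $i_0$, are minor (and slightly cleaner) presentational variants of the paper's explicit choice $\beta=-\alpha_{11}/\alpha_{21}$ after its ``without loss of generality'' relabelling.
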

\begin{proof}
	Without loss of generality, assume that $J=\{1,2,\ldots,n-1\}.$ 
	Let $\|T(A)\|=k_1,\|T(B)\|=k_2.$ Using Lemma \ref{lem-01}, note that $$\|T(A+B)\|=\max\{|k_1+\overline{\mu_i}k_2|:1\leq i\leq n\}.$$  If possible, assume that $\Re(\mu_k)\neq \Re(\mu_j)$ for some $1\leq j,k\leq n.$ Then there exists $i$ such that $e_i\notin M_{T(A+B)}.$ Without loss of generality, assume that $e_n\notin M_{T(A+B)}.$ Let $\mathcal{V}=span\{E_{in}:1\leq i\leq n\}.$ Then for all $C\in \mathcal{V},$ $M_C=\{\lambda e_n:\lambda\in \mathbb{T}\}.$ Therefore if $C\in \mathcal{V},$ then $T(A+B)$ is not parallel to $C,$  and so $A+B$ is not parallel to $T^{-1}(C).$ Note that, for all $1\leq i\leq n-1,$ $e_i\in M_{A+B}$ and $$(A+B)e_i\parallel T^{-1}(C)e_i,$$ since $\frac{1}{2}(Ae_i+Be_i)=Ae_i\in E_{\ell_1^m}.$ So if $e_i\in M_{T^{-1}(C)},$ then $A+B\parallel T^{-1}(C),$ a contradiction. Therefore, for all $C\in \mathcal{V},$ $M_{T^{-1}(C)}=\{\lambda e_n:\lambda\in \mathbb{T}\}.$ Suppose for $1\leq i\leq m,$
	\[T^{-1}(E_{in})e_n=\alpha_{i1}Ae_n+\alpha_{i2}Be_n+u_i,\]
	where $\alpha_{i1},\alpha_{i2}\in \mathbb{F},$ and $ u_i\in \ell_1^m$ such that $\|\beta Ae_n+\gamma Be_n+u_i\|_1=|\beta|+|\gamma|+\|u_i\|_1$ for all $\beta,\gamma \in \mathbb{F}.$ It is easy to observe that if $\alpha_{i1}=0,$ then 
	$$Ae_n+Be_n\parallel \alpha_{i1}Ae_n+\alpha_{i2}Be_n+u_i.$$ 
	Indeed, let $\alpha_{i1}=0,$ and $\alpha_{i2}=e^{i\theta}|\alpha_{i2}|.$ Then 
	\begin{eqnarray*}
		&&\|(Ae_n+Be_n)+e^{-i\theta}(\alpha_{i2}Be_n+u_i)\|_1\\
		&=&2+|\alpha_{i2}|+\|u_i\|_1\\
		&=&\|Ae_n+Be_n\|_1+\|\alpha_{i2}Be_n+u_i\|_1,
	\end{eqnarray*}
	where the last equality holds since $Ae_n,$ and $Be_n$ are linearly independent extreme points of the closed unit ball of $\ell_1^m.$ Thus, if $\alpha_{i1}=0,$ then 
	\begin{eqnarray*}
		&&	Ae_n+Be_n\parallel \alpha_{i1}Ae_n+\alpha_{i2}Be_n+u_i\\
		&\Rightarrow  &(A+B)e_n \parallel T^{-1}(E_{in})e_n\\
		&\Rightarrow  & (A+B) \parallel T^{-1}(E_{in}),
	\end{eqnarray*}
	a contradiction. Assume $\alpha_{11}\neq 0,\alpha_{21}\neq 0.$ Choose $\beta=-\frac{\alpha_{11}}{\alpha_{21}}.$ Then 
	\[T^{-1}(E_{1n}+\beta E_{2n})e_n=(\alpha_{12}+\beta \alpha_{22})Be_n+(u_1+\beta u_2),\]
	and so \[(A+B)\parallel T^{-1}(E_{1n}+\beta E_{2n}),\] a contradiction, since $E_{1n}+\beta E_{2n}\in \mathcal{V}.$ Thus we must have $$\Re(\mu_j)=\Re(\mu_k) \text{ for all }1\leq j,k\leq n.$$ 
\end{proof}

The following lemma shows that the image of certain extreme contractions of $\mathcal{L}(\ell_1^n,\ell_1^m)$ under $T$ satisfy some special properties.
From the next lemma it can be easily proved that $T$ attains norm at each extreme contraction. Although the following proofs hold for $\mathbb{F}\in \{\mathbb{R},\mathbb{C}\}$, we would like to mention that significantly simple alternative proofs can be provided if $\mathbb{F}=\mathbb{R}.$ In what follows $M_T$ denotes the set of all unit vectors of $\mathcal{L}(\ell_1^n,\ell_1^m)$ where $T$ attains its norm, that is,
\[M_T:=\{A\in \mathcal{L}(\ell_1^n,\ell_1^m) :\|A\|=1,\|T(A)\|=\|T\|\}.\]

\begin{lemma}\label{lem-md}
	Let $S_1,S_2$ be extreme contractions of $\mathcal{L}(\ell_1^n,\ell_1^m)$ such that $S_1\in M_T.$ 
	Suppose there exist $1\leq l,r\leq n,$ where $r\neq l$ satisfying the following.
	
	\rm(i) Either $\{S_1e_l,S_2e_l\}$ is linearly independent or $S_1e_l=S_2e_l.$
	
	\rm(ii) $\{S_1e_r,S_2e_r\}$ is linearly independent.\\
	 Then $S_2\in M_T.$ Moreover, at least one of the following holds.
	\begin{enumerate}[(I)]
		\item There exists $1\leq j\leq n$ such that  $\{T(S_1)e_j,T(S_2)e_j\}$ is linearly independent.
		\item   There exists $1\leq i,j\leq n$ such that $T(S_1)e_i=T(S_2)e_i,$ and  $T(S_1)e_j=-T(S_2)e_j.$ 
	\end{enumerate}
\end{lemma}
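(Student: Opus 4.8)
The plan is to leverage Lemma \ref{lem-01} and Lemma \ref{lem-07} together with a careful analysis of parallelism forced by the extreme-contraction structure. First I would record that, by Lemma \ref{lem-01}, $\frac{1}{\|T(S_i)\|}T(S_i)$ is an extreme contraction for $i=1,2$; write $k_i=\|T(S_i)\|$, so each $T(S_i)e_j$ is (up to the scalar $k_i$) an extreme point of the unit ball of $\ell_1^m$, i.e. of the form $k_i\varepsilon\mathbf{e_t}$ with $\varepsilon\in\mathbb{T}$. Next, as in the proof of Lemma \ref{lem-01}, every extreme contraction is parallel to \emph{every} operator in $\mathcal{L}(\ell_1^n,\ell_1^m)$, because at a norm-attaining coordinate vector $e_j$ the image $S_ie_j\in E_{\ell_1^m}$ is parallel to everything; hence $S_1\parallel X$ for all $X$, so $T(S_1)\parallel T(X)$ for all $X$, and since $T$ is invertible this says $T(S_1)\parallel Y$ for all $Y\in\mathcal{L}(\ell_1^n,\ell_1^m)$. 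In particular $T(S_1)$ is parallel to every $E_{1j}$, whose norm is attained only at $\lambda e_j$, forcing $\|T(S_1)e_j\|_1=\|T(S_1)\|=k_1$ for all $j$; combined with extremality this gives $T(S_1)e_j=k_1\gamma_j\mathbf{e}_{t(j)}$ for unit scalars $\gamma_j$. So far this only uses $S_1\in M_T$.

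The core of the argument is to push the hypotheses (i)–(ii) on columns $l,r$ through $T$. The idea is to build, from $S_1$ and $S_2$, an auxiliary extreme contraction $S$ that agrees with $S_1$ on all columns except one and with $S_2$ on the remaining column(s), so that Lemma \ref{lem-07} applies to the pairs $(S_1,S)$ and $(S,S_2)$. Concretely, for $i\notin J$ I would interpolate: since $S_1e_r,S_2e_r$ are linearly independent extreme points of the unit ball of $\ell_1^m$ (they are $\pm\mathbf{e}_{a},\pm\mathbf{e}_{b}$ with $a\neq b$), I can form a chain of extreme contractions changing one column at a time, each consecutive pair satisfying conditions (i)–(iii) of Lemma \ref{lem-07}. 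Applying Lemma \ref{lem-07} along this chain gives: whenever two extreme contractions differ in exactly one column and the columns there are independent, the ratios $\mu_j=\frac{k_1^{-1}T(S')e_j}{k_2^{-1}T(S'')e_j}$ all have the same real part. The first conclusion $S_2\in M_T$ should then follow: $\|T(S_2)\|=\max_j k_2|{\rm stuff}|$ relations, propagated along the chain, force $k_2=\|T\|$ once we know $k_1=\|T\|$ and the real parts of all the $\mu_j$ coincide — if some $\|T(S_2)e_j\|_1<k_2$ we would instead land, as in Lemma \ref{lem-07}, in a situation where $A+B$ is parallel to everything in a span of $E_{in}$'s, contradicting invertibility.

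For the ``Moreover'' part, suppose (I) fails, i.e. $\{T(S_1)e_j,T(S_2)e_j\}$ is linearly dependent for every $j$; then, both being nonzero extreme-type vectors $k_i\gamma_j^{(i)}\mathbf{e}_{t_i(j)}$, dependence forces $t_1(j)=t_2(j)$ and $T(S_2)e_j=k_2 k_1^{-1}\mu_j\,T(S_1)e_j$ with $\mu_j\in\mathbb{T}$. Now I apply Lemma \ref{lem-07} to $(S_1,S_2)$ directly if they differ in exactly one column; in general I chain through the interpolating extreme contraction as above, and each application yields equality of real parts of the corresponding ratios. Since $S_1\in M_T$ (and $S_2\in M_T$ by the first part), $k_1=k_2=\|T\|$, so $\mu_j$ themselves all have the same real part $c$. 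Because each $\mu_j\in\mathbb{T}$ and $\Re(\mu_j)=c$, we get $\mu_j\in\{c+i\sqrt{1-c^2},\,c-i\sqrt{1-c^2}\}$ — exactly two possible values, complex conjugates. Evaluating at columns $l$ and $r$: on column $r$ (independent), and on column $l$ (either independent or with $S_1e_l=S_2e_l$, the latter forcing $\mu_l=1$ hence $c=1$ and all $\mu_j=1$, giving $T(S_1)=T(S_2)$ and $S_1=S_2$ by invertibility, a degenerate case), one sees $c$ cannot be $\pm 1$ unless all columns coincide; so the two conjugate values are genuinely distinct, and I must show both actually occur, OR that $c=0$ so that $\mu_j=\pm 1$ with both signs present. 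The cleanest route: show $c\in\{-1,0,1\}$ by testing parallelism of $T(S_1)+\lambda T(S_2)$ against suitable rank-one type operators pulled back by $T^{-1}$ (mimicking the end of Lemma \ref{lem-07}'s proof, where $|1-\lambda|=|1+\lambda|=2$ type contradictions pin down the scalar), and then $c=\pm1$ collapses to $S_1=\pm S_2$ while $c=0$ yields $\mu_j=\pm 1$; condition (ii) plus (i) guarantee the set $\{j:\mu_j=1\}$ and $\{j:\mu_j=-1\}$ are both nonempty, which is precisely conclusion (II). The main obstacle I anticipate is this last dichotomy — controlling $c$ and ensuring both conjugate values of $\mu_j$ are attained — since it requires carefully choosing test operators in the image and using invertibility of $T$ to transport non-parallelism back, exactly the delicate maneuver already performed in Lemma \ref{lem-07}; the bookkeeping of which columns lie in $J$ and which do not, across the interpolation chain, is where the technical weight sits.
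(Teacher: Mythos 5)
Your proposal has a genuine gap, in fact two related ones. First, the engine of the paper's argument is missing. The paper introduces two auxiliary extreme contractions $S_3,S_4$ (obtained by swapping the $r$-th columns of $S_1,S_2$ and negating them) precisely so that the \emph{linear identity} $S_1+S_4=S_2+S_3$ holds; combined with Lemma \ref{lem-01} this yields the column equations $x_rk_1+w_rk_4=y_rk_2+z_rk_3$ with all of $x_r,y_r,z_r,w_r\in E_{\ell_1^m}$, and it is from these equations (plus the parallel relations $S_1-S_4\parallel S_1+S_2$, $S_1-S_4\parallel S_1+S_3$, $S_1+S_2\parallel S_1+S_3$, which give norm equalities such as $\|2x_ik_1+y_ik_2\|_1=2k_1+k_2$) that $k_1=k_2$ is extracted. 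Your chain of one-column interpolants produces no analogous identity, and the only output of Lemma \ref{lem-07} is that the real parts of the ratios $\mu_j$ coincide --- a statement that carries no information about the norms $k_1,k_2$, so "propagated along the chain, force $k_2=\|T\|$" does not follow from anything you have set up. Moreover, Lemma \ref{lem-07} has hypothesis (iii) (all columns of the two images pairwise dependent) and hypothesis (i) (agreement on $n-1$ source columns); neither can be assumed for the links of an arbitrary interpolation chain, and $S_1,S_2$ themselves need not agree on $n-1$ columns, so you cannot apply that lemma to $(S_1,S_2)$ directly either.

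Second, the endgame for conclusion (II) is internally inconsistent. If all $\mu_j$ had a common real part $c$, you could never reach $\mu_i=1$ and $\mu_j=-1$ simultaneously, since $\Re(1)\neq\Re(-1)$; and the claim that $c=0$ yields $\mu_j=\pm1$ is false ($|\mu_j|=1$ and $\Re(\mu_j)=0$ gives $\mu_j=\pm i$). The resolution in the paper is that the constant-real-part conclusion of Lemma \ref{lem-07} is applied only to the pair $(S_1,S_3)$ (which genuinely differs in one column with independent images there, under the running dependence assumption), and that assumption is then shown to be untenable via the $\xi S_1+\eta S_2+\zeta S_3=0$ construction; conclusion (II) is instead reached in Step 2 of Case b by locating, through the parallelism $S_1-S_4\parallel S_1+S_3$ and the identity $S_1+S_4=S_2+S_3$, one column with $x_j=y_j$ and another with $x_i=z_i=-w_i$, hence $x_i=-y_i$. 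Without the $S_3,S_4$ scaffolding your argument cannot produce the two columns with opposite signs.
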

\begin{proof}
	Without loss of generality, assume that $l=1,$ and $r=n.$ 
Define
	\[S_3e_i=\begin{cases}
		S_1e_i ,&\text{ if } 1\leq i\leq n-1\\
		-S_2e_n,& \text{ if } i=n 
	\end{cases}, \quad
	S_4e_i=\begin{cases}
		S_2e_i, &\text{ if } 1\leq i\leq n-1\\
		-S_1e_n,& \text{ if } i=n
	\end{cases} .\]
Note that in this case, $S_i ~(1\leq i\leq 4)$ satisfies the following properties. 
	\begin{enumerate}[(P1)]
		\item 	$S_1+S_4=S_2+S_3.$
		\item $S_3,S_4$ are extreme contractions of $\mathcal{L}(\ell_1^n,\ell_1^m),$ and $S_2+S_3\neq 0.$
		\item $S_1-S_4\parallel S_1+S_2,$ and so  $T(S_1-S_4)\parallel T(S_1+S_2).$ \\
		Indeed, since $e_n\in M_{S_1-S_4}\cap M_{S_1+S_2},$ and $(S_1-S_4)e_n\parallel (S_1+S_2)e_n,$ we get  $S_1-S_4\parallel S_1+S_2.$ 
		\item $S_1-S_4\parallel S_1+S_3,$ and so  $T(S_1-S_4)\parallel T(S_1+S_3).$\\
		Since $e_n\in M_{S_1-S_4}\cap M_{S_1+S_3},$ and $(S_1-S_4)e_n\parallel (S_1+S_3)e_n,$ we have $S_1-S_4\parallel S_1+S_3.$ 
		\item $S_1+S_2\parallel S_1+S_3,$ and so  $T(S_1+S_2)\parallel T(S_1+S_3).$\\
		Since $e_1\in M_{S_1+S_2}\cap M_{S_1+S_3},$ and $(S_1+S_2)e_1\parallel (S_1+S_3)e_1,$ we have $S_1+S_2\parallel S_1+S_3.$ 
	\end{enumerate}
Let $k_j=\|T(S_j)\|$ for $1\leq j\leq 4.$ Since $T$ is invertible, $k_j\neq 0$ for all $1\leq j\leq 4.$ 
 By Lemma \ref{lem-01}, for all $1\leq r\leq n,$ suppose
\[\frac{T(S_1)}{k_1}e_r=x_r, \quad \frac{T(S_2)}{k_2}e_r=y_r,\quad \frac{T(S_3)}{k_3}e_r=z_r,\quad \frac{T(S_4)}{k_4}e_r=w_r,\]
where $x_r,y_r,z_r,w_r\in E_{\ell_1^m}.$  Using (P1), we get
\begin{equation}\label{eq-c01}
	x_rk_1+w_rk_4=y_rk_2+z_rk_3,\quad \forall\quad 1\leq r\leq n. 
\end{equation}
\textbf{Case a:} Suppose (I) holds. In this case, it only remains to show that $S_2\in M_T,$ that is, $k_1=k_2.$ Now, there are two subcases. We deal with these subcases separately.

Subcase a.1: There exists $1\leq r\leq n$ such that $\{x_r,w_r\}$ is linearly independent.

Subcase a.2: $\{x_r,w_r\}$ is linearly dependent  for all $1\leq r\leq n.$\\

\noindent \textbf{Subcase a.1:} Suppose $\{x_r,w_r\}$ is linearly independent for some $r$.  If $\{y_r,z_r\}$ is linearly dependent, then from \eqref{eq-c01}, it follows that either $k_1=0$ or $k_4=0,$ a contradiction. So   $\{y_r,z_r\}$ is linearly independent. Now by \eqref{eq-c01}, we get either
\begin{equation}\label{eq-c115}
x_r=y_r,\quad z_r=w_r, \quad k_1=k_2, \quad  k_3=k_4
\end{equation} or 
\begin{equation}\label{eq-c02}
	x_r=z_r, \quad y_r=w_r,\quad  k_1=k_3,\quad  k_2=k_4.
\end{equation}
If \eqref{eq-c115} holds, then we are done.  So suppose \eqref{eq-c02} holds. By (P3), there exists 
 $e_i\in M_{T(S_1-S_4)}\cap M_{T(S_1+S_2)}$ such that $T(S_1-S_4)e_i\parallel T(S_1+S_2)e_i.$  Suppose $\lambda\in \mathbb{T},$ and 
\begin{eqnarray*}
	&&\|T(S_1-S_4)\|+\| T(S_1+S_2)\|\\
	&=&\|T(S_1-S_4)e_i\|_1+\| T(S_1+S_2)e_i\|_1\\
	&=&\|T(S_1-S_4)e_i+\lambda T(S_1+S_2)e_i\|_1\\
	&=&\|(1+\lambda)x_ik_1-w_ik_4+\lambda y_ik_2\|_1.
\end{eqnarray*}
Note that 
\[k_1+k_4\geq\|T(S_1-S_4)\|\geq \|T(S_1-S_4)e_r\|_1=\|x_rk_1-w_rk_4\|_1=k_1+k_4,\]
and 
\[k_1+k_2\geq\|T(S_1+S_2)\|\geq \|T(S_1+S_2)e_r\|_1=\|x_rk_1+y_rk_2\|_1=k_1+k_2.\]
Therefore, 
\begin{eqnarray*}
	&&2k_1+k_2+k_4\\
	&=&\|(1+\lambda)x_ik_1-w_ik_4+\lambda y_ik_2\|_1\\
	&\leq &|1+\lambda| k_1+k_2+k_4\\
	&\leq& 2k_1+k_2+k_4,
\end{eqnarray*}
which implies that $\lambda=1,$ and 
\begin{eqnarray}\label{eq-c112}
	\begin{split}
		&&2k_1+k_2+k_4\\
		&=&\|2x_ik_1-w_ik_4+ y_ik_2\|_1\\
		&\leq &\|2x_ik_1+y_ik_2\|_1+k_4\\
		&\leq& 2k_1+k_2+k_4.
	\end{split}
\end{eqnarray}
So $\|2x_ik_1+y_ik_2\|_1=2k_1+k_2,$ that is, either $\{x_i,y_i\}$ is linearly independent or $x_i=y_i.$\\

 If $\{x_i,y_i\}$ is independent, then from \eqref{eq-c01}, it follows that  
either 
\begin{equation}\label{eq-c116}
x_i=-w_i, \quad y_i=-z_i, \quad k_1=k_4, \quad k_2=k_3
\end{equation}
 or 
\begin{equation}\label{eq-c111}
	x_i=z_i, \quad y_i=w_i, \quad k_1=k_3, \quad k_2=k_4.
\end{equation}
If possible, assume  \eqref{eq-c111} holds. Using \eqref{eq-c111} in \eqref{eq-c112}, we have 
\[2k_1+k_2+k_4=\|2x_ik_1-w_ik_4+y_ik_2\|_1=2k_1,\]
that is $k_2=k_4=0,$ a contradiction.
Therefore \eqref{eq-c116} holds.  Using $k_1=k_4$ in \eqref{eq-c02}, we get $k_1=k_2.$ \\

 Suppose $x_i=y_i.$  Proceeding similarly, from \eqref{eq-c112} we have $\|2x_ik_1-w_ik_4\|_1=2k_1+k_4,$ that is, either $\{x_i,w_i\}$ is linearly independent or $x_i=-w_i.$ Now, $\{x_i,w_i\}$ is independent implies that either \eqref{eq-c115} or \eqref{eq-c02} holds with the index replaced by $i$.  Again if \eqref{eq-c115} holds, then we are done. Otherwise, from \eqref{eq-c112} we  get a contradiction. On the other hand, suppose $x_i=-w_i.$ Clearly $\{y_i,z_i\}$ is dependent. Suppose $y_i=\mu z_i,$ for some $\mu\in \mathbb{T}.$ By \eqref{eq-c01}, 
\begin{eqnarray*}
	w_i(- k_1+k_4)&=&z_i(\mu k_2+ k_3)\\
	\Rightarrow \|w_i(- k_1+k_4)\|_1&=&\|z_i(\mu k_2+ k_3)\|_1\\
	\Rightarrow |-k_1+k_4|&=&|\mu k_4+k_1|, \quad (\text{by } \eqref{eq-c02})\\
	\Rightarrow\mu&= & -1.
\end{eqnarray*}
Thus,
\[x_i=y_i=-z_i=-w_i.\]
Using the above equality in \eqref{eq-c01} and \eqref{eq-c02}, we get $k_1=k_2.$\\

\noindent \textbf{Subcase a.2:} Suppose  $\{x_r,w_r\}$ is linearly dependent  for all $1\leq r\leq n.$ Since (I) holds, that is, $\{x_j,y_j\}$ is linearly independent, so we must have
\begin{equation}\label{eq-c117}
	x_j=-w_j, \quad y_j=-z_j, \quad k_1=k_4, \quad k_2=k_3
\end{equation}
By (P5), there exists $e_r\in M_{T(S_1+S_2)}\cap M_{T(S_1+S_3)}$ such that $$T(S_1+S_2)e_r\parallel T(S_1+S_3)e_r.$$
Suppose $\lambda\in \mathbb{T}$ be such that 
\begin{eqnarray*}
	&&\|T(S_1+S_2)\|+\| T(S_1+S_3)\|\\
	&=&\|T(S_1+S_2)e_r\|_1+\| T(S_1+S_3)e_r\|_1\\
	&=&\|T(S_1+S_2)e_r+\lambda T(S_1+S_3)e_r\|_1\\
	&=&\|(1+\lambda)x_rk_1+y_rk_2+\lambda z_rk_3\|_1.
\end{eqnarray*}
Since \[k_1+k_2\geq\|T(S_1+S_2)\|\geq \|T(S_1+S_2)e_j\|_1=\|x_jk_1+y_jk_2\|_1=k_1+k_2,\]
and 
\[k_1+k_3\geq\|T(S_1+S_3)\|\geq \|T(S_1+S_3)e_j\|_1=\|x_jk_1+z_jk_3\|_1=k_1+k_3,\]
we have
\begin{equation}\label{eq-c118}
2k_1+k_2+k_3=\|(1+\lambda)x_rk_1+y_rk_2+\lambda z_rk_3\|_1.
\end{equation}
Now as previous, it is easy to observe that $$\lambda=1,\quad \|2x_rk_1+y_rk_2\|_1=2k_1+k_2, \quad \|2x_rk_1+z_rk_3\|_1=2k_1+k_3.$$ 
Thus, either $\{x_r,y_r\}$ is linearly independent or $x_r=y_r,$ and either $\{x_r,z_r\}$ is linearly independent or $x_r=z_r.$ If either $\{x_r,y_r\}$ or $\{x_r,z_r\}$  is linearly independent, then since $\{x_r,w_r\}$ is dependent, we must have \eqref{eq-c117} for the index $r,$ which contradicts \eqref{eq-c118}. Therefore, $x_r=y_r=z_r.$ Assume that $w_r=\mu x_r,$ for some $\mu\in \mathbb{T}.$ Then from \eqref{eq-c01} and \eqref{eq-c117}, it follows that 
\begin{eqnarray*}
(1	+\mu )k_1=2k_2 \quad \Rightarrow\quad  \Im(\mu)=0 \quad \Rightarrow\quad k_1=k_2.
\end{eqnarray*}

\textbf{Case b:} Suppose (I) does not hold, that is $\{x_i,y_i\}$ is linearly dependent for all $1\leq i\leq n.$ 

\textbf{Step 1.} In this step, we prove that there exists $1\leq j\leq n$ such that $\{x_j,w_j\}$ is linearly independent. If possible, assume that $\{x_i,w_i\}$ is linearly dependent for all $i.$ Then  from \eqref{eq-c01}, it is easy to observe that  $\{x_i,z_i\}$ is also linearly dependent for all $i.$ Suppose for $1\leq i\leq n,$ $\lambda_i,\mu_i,\nu_i\in \mathbb{T}$ be such that
\[x_i=\lambda_i y_i=\mu_iz_i=\nu_i w_i. \]
Thus from \eqref{eq-c01}, 
\begin{equation}\label{eq-c119}
	\begin{split}
&k_1=\overline{\lambda_i}k_2+\overline{\mu_i}k_3-\overline{\nu_i}k_4, ~\forall~1\leq i\leq n\\
\Rightarrow ~& \Re(\lambda_i-\lambda_j)k_2+\Re(\mu_i-\mu_j)k_3+\Re(\nu_j-\nu_i)k_4=0, \text{ and }\\
 & \Im(\lambda_i-\lambda_j)k_2+\Im(\mu_i-\mu_j)k_3+\Im(\nu_j-\nu_i)k_4=0, ~\forall~1\leq i,j\leq n.
\end{split}
\end{equation}
Since by Lemma \ref{lem-07}, $\Re(\mu_i)=\Re(\mu_j)$ for all $1\leq i,j\leq n,$  
\begin{equation}\label{eq-c20}
\Re(\lambda_i-\lambda_j)k_2+\Re(\nu_j-\nu_i)k_4=0.
\end{equation}
Now, using (P3) we get $1\leq r\leq n$ such that $e_r\in M_{T(S_1-S_4)}\cap M_{T(S_1+S_2)}.$ 
For all $1\leq j\leq n,$
\begin{eqnarray*}
\|T(S_1-S_4)e_r\|_1&\geq &	\|T(S_1-S_4)e_j\|_1\\
\Rightarrow |k_1-\overline{\nu_r}k_4|&\geq& |k_1-\overline{\nu_j}k_4|\\
\Rightarrow \Re(\nu_j)&\geq&\Re(\nu_r).
\end{eqnarray*}
Similarly, \[\|T(S_1+S_2)e_r\|_1\geq 	\|T(S_1+S_2)e_r\|_1
\quad\Rightarrow \quad \Re(\lambda_r)\geq\Re(\lambda_j).\]
Using these in \eqref{eq-c20}, we have $\Re(\lambda_j)=\Re(\lambda_r)$ and $\Re(\nu_j)=\Re(\nu_r)$ for all $1\leq j\leq n.$ Now from \eqref{eq-c119}, it is easy to observe that for each $j,$ either 
\[\lambda_j=\lambda_1, \quad \mu_j=\mu_1, \quad \nu_j=\nu_1\] or  
\[\lambda_j=\overline{\lambda_1}, \quad \mu_j=\overline{\mu_1}, \quad \nu_j=\overline{\nu_1}.\]
Choose real scalars $\xi,\eta,\zeta$ not all zero simultaneously such that 
\begin{eqnarray*}
\xi k_1+\eta \Re(\lambda_1)k_2+\zeta \Re(\mu_1)k_3&=&0, ~\text{ and }\\
	\eta \Im(\lambda_1)k_2+\zeta \Im(\mu_1)k_3&=&0.
\end{eqnarray*}
Therefore,
\begin{eqnarray*}
	\xi k_1+\eta \lambda_1 k_2+\zeta \mu_1k_3&=&0, ~\text{ and }\\
\xi k_1+	\eta \overline{\lambda_1}k_2+\zeta \overline{\mu_1}k_3&=&0,
\end{eqnarray*}
and so 
\begin{eqnarray*}
	\xi k_1+	\eta \overline{\lambda_j}k_2+\zeta \overline{\mu_j}k_3&=&0, ~\text{ for all } 1\leq j\leq n\\
	\Rightarrow 	(\xi k_1+	\eta \overline{\lambda_j}k_2+\zeta \overline{\mu_j}k_3)x_j&=&0, ~\text{ for all } 1\leq j\leq n\\
	\Rightarrow 	(\xi T(S_1)+	\eta T(S_2)+\zeta T(S_3))e_j&=&0, ~\text{ for all } 1\leq j\leq n\\
	\Rightarrow 	\xi T(S_1)+	\eta T(S_2)+\zeta T(S_3)&=&0\\
	\Rightarrow 	\xi S_1+	\eta S_2+\zeta S_3&=&0\\
		\Rightarrow 	(\xi S_1+	\eta S_2+\zeta S_3)e_n&=&0\\
			\Rightarrow 	\xi S_1e_n+	(\eta-\zeta) S_2e_n&=&0\\
			\Rightarrow \xi=0, \text{ and } \eta-\zeta&=&0, 
		\end{eqnarray*}
	 since  $\{S_1e_n,S_2e_n\}$  is linearly independent. Therefore $S_2+S_3=0,$ which contradicts (P2). This completes the proof of Step 1.
	 
	 \textbf{Step 2.} Using Step 1, suppose $\{x_j,w_j\}$ is linearly independent for some $j.$ Since $\{x_j,y_j\}$ is linearly dependent, by \eqref{eq-c01} we must have,
	 \begin{equation}\label{eq-c21}
	 x_j=y_j,\quad w_j=z_j, \quad k_1=k_2, \quad k_3=k_4.
	 \end{equation}
	 To prove (II), it only remains to show that $x_i=-y_i$ for some $i.$ Clearly,
	 \[\|T(S_1-S_4)\|\geq \|T(S_1-S_4)e_j\|_1=\|x_jk_1-w_jk_4\|_1=k_1+k_4,\] and so $\|T(S_1-S_4)\|=k_1+k_4.$ Similarly, $\|T(S_1+S_3)\|=k_1+k_3.$
By (P4), let $e_i\in M_{T(S_1-S_4)}\cap M_{T(S_1+S_3)}$ be such that 
\[T(S_1-S_4)e_i\parallel T(S_1+S_3)e_i.\] Suppose $\lambda\in \mathbb{T}$ such that 
\[\|T(S_1-S_4)e_i\|_1+\|T(S_1+S_3)e_i\|_1=\|T(S_1-S_4)e_i+\lambda T(S_1+S_3)e_i\|_1.\]
Therefore,
\begin{equation}\label{eq-c22}
	\begin{split}
2k_1+k_3+k_4&=\|T(S_1-S_4)\|+\|T(S_1+S_3)\|	\\
&=\|T(S_1-S_4)e_i\|_1+\|T(S_1+S_3)e_i\|_1\\
&=\|(1+\lambda)x_ik_1+\lambda z_ik_3-w_ik_4\|_1.
\end{split}
\end{equation}
Now it is easy to observe that 
\[\lambda=1,\quad \|2x_ik_1+z_ik_3\|_1=2k_1+k_3, \quad \|2x_ik_1-w_ik_4\|_1=2k_1+k_4.\]
Thus, either $\{x_i,z_i\}$ is linearly independent or $x_i=z_i,$ and either $\{x_i,w_i\}$ is linearly independent or $x_i=-w_i.$ Since $\{x_i,y_i\}$ is linearly dependent, so if either $\{x_i,z_i\}$  or $\{x_i,w_i\}$ is linearly independent, then from \eqref{eq-c01} it follows that \eqref{eq-c21} holds for the index $i,$ which contradicts \eqref{eq-c22}. Therefore, we must have $$x_i=z_i=-w_i.$$ Using this equality together with $k_1=k_2,$ and $k_3=k_4$ in \eqref{eq-c01}, it is straightforward to check that $x_i=-y_i.$
\end{proof}

\begin{lemma}\label{lem-04}
	Let $1\leq j\leq n,~1\leq i,k\leq m,$ and $i\neq k.$ Then\\
	\rm(i) $T$ attains norm at $E_{ij}.$\\
\rm(ii) $T$ attains norm at $\frac{1}{2}(\xi E_{ij}+\eta E_{kj}),$ for all $\xi ,\eta\in \mathbb{T}.$ \\
\rm(iii) if $B=\alpha E_{ij}+\beta E_{kj},$ then $\|T(B)\|=(|\alpha |+|\beta |)\|T\|,$ for all non-zero $\alpha,\beta \in \mathbb{F}.$
\end{lemma}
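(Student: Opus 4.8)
The plan is to write each of the operators $E_{ij}$, $\tfrac12(\xi E_{ij}+\eta E_{kj})$ and $\alpha E_{ij}+\beta E_{kj}$ as a convex combination of extreme contractions of $\mathcal L(\ell_1^n,\ell_1^m)$ — all of which lie in $M_T$, as follows from Lemma~\ref{lem-md} — and then to push the norm across by a triangle‑equality argument. The transfer tool, read off from Lemmas~\ref{lem-01} and~\ref{lem-md}, is: \emph{if two extreme contractions $S_1,S_2$ satisfy the hypotheses of Lemma~\ref{lem-md}, then $T(S_1),T(S_2)$ form a TEA pair, and $\|T(S_1)+T(S_2)\|=2\|T\|$.} Indeed $T(S_1)/\|T\|$ and $T(S_2)/\|T\|$ are again extreme contractions by Lemma~\ref{lem-01}, so each of their columns is $\|T\|$ times a unimodular multiple of a coordinate vector of $\ell_1^m$; and whichever of (I), (II) holds in Lemma~\ref{lem-md} provides a column $c$ at which $T(S_1)e_c$ and $T(S_2)e_c$ are either equal or unimodular multiples of distinct coordinate vectors, so that $\|T(S_1)e_c+T(S_2)e_c\|_1=2\|T\|$ and hence $\|T(S_1)+T(S_2)\|\ge 2\|T\|=\|T(S_1)\|+\|T(S_2)\|$. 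The subtle point is that pairwise TEA of a finite family $W_1,\dots,W_N$ of extreme contractions does \emph{not} by itself give $\|\sum_p t_p T(W_p)\|=(\sum_p t_p)\|T\|$; one needs a \emph{single} column $c$ of $\ell_1^m$ at which $T(W_1)/\|T\|,\dots,T(W_N)/\|T\|$ are pairwise equal or pairwise supported on distinct coordinates, and only then is $\|\sum_p t_p T(W_p)e_c\|_1=(\sum_p t_p)\|T\|$.

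For part~(i) the obstacle is that the natural decomposition $2E_{ij}=S+S'$, with $S,S'$ extreme contractions agreeing in column $j$ (value $\mathbf{e_i}$) and negatives of one another in every other column, is \emph{not} admissible for Lemma~\ref{lem-md}, since $S$ and $S'$ are dependent in each column $\ne j$. For $n\ge 3$ one instead writes $E_{ij}=\tfrac14(W_1+\dots+W_4)$ with extreme contractions $W_p$ having $W_pe_j=\mathbf{e_i}$, whose columns $\ne j$ are chosen simultaneously to cancel in the average, to make every pair $(W_p,W_q)$ independent in some column $\ne j$ (this is where the two columns besides $j$ are used), and — the delicate requirement — so that the four images share a common TEA‑witnessing column; the transfer tool then yields $\|T(E_{ij})\|=\|T\|$. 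When $n=2$ there is only one column besides $j$ and no such family exists; here one argues separately, first deducing from Lemma~\ref{lem-md} the finer fact that there is a fixed column $\pi(j)$ of $\ell_1^m$ for which $T(S)e_{\pi(j)}$ depends (up to a unimodular scalar) only on $Se_j$ among extreme contractions $S$, and then reading $\|T(E_{ij})\|=\|T\|$ off $T(S)e_{\pi(j)}=T(S')e_{\pi(j)}$. Arranging a common witnessing column (equivalently, establishing such a column correspondence), together with the complex case $\mathbb F=\mathbb C$ flagged before Lemma~\ref{lem-md}, is the principal technical burden; the one genuinely hard idea everywhere is to upgrade ``$T$ preserves parallel pairs'' to ``the relevant images form TEA pairs'', which is exactly what Lemma~\ref{lem-md} and the extreme‑point geometry of $\ell_1^m$ supply.

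Part~(ii) runs on the same scheme, now with $W_pe_j\in\{\xi\mathbf{e_i},\eta\mathbf{e_k}\}$ so that the $j$‑th column of the average is $\tfrac12(\xi\mathbf{e_i}+\eta\mathbf{e_k})$. For part~(iii) one may, by homogeneity, take $|\alpha|+|\beta|=2$ and $|\alpha|\ge|\beta|$; the subcase $|\alpha|=|\beta|=1$ is part~(ii) applied to $\tfrac12(\tfrac{\alpha}{|\alpha|}E_{ij}+\tfrac{\beta}{|\beta|}E_{kj})$, so assume $|\beta|<1$. Then $B/\|B\|=\tfrac1{|\alpha|+|\beta|}(\alpha E_{ij}+\beta E_{kj})$ is a genuine convex combination of eight extreme contractions, four of total weight $\tfrac{|\alpha|}{|\alpha|+|\beta|}$ with $j$‑th column $\tfrac{\alpha}{|\alpha|}\mathbf{e_i}$ and four of total weight $\tfrac{|\beta|}{|\alpha|+|\beta|}$ with $j$‑th column $\tfrac{\beta}{|\beta|}\mathbf{e_k}$, whose columns $\ne j$ cancel within each group of four; those columns are then chosen (using $n\ge 3$) to make every pair admissible for Lemma~\ref{lem-md} — a cross‑group pair being already independent in column $j$ since $i\ne k$ — and so that the eight images share a common TEA‑witnessing column. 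The transfer tool gives $\|T(B/\|B\|)\|=\|T\|$, i.e.\ $\|T(B)\|=(|\alpha|+|\beta|)\|T\|$; and $n=2$ is again reduced to the column correspondence $\pi$, where the bulk of the remaining case‑analysis (and the complex‑scalar subtleties) is concentrated.
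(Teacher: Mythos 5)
There is a genuine gap, and it sits exactly where you place ``the principal technical burden'': the existence of a \emph{common} TEA-witnessing column for your four (resp.\ eight) extreme contractions is never established, and nothing in Lemma \ref{lem-md} gives it to you. Applying Lemma \ref{lem-md} pairwise only tells you that \emph{each} pair $(W_p,W_q)$ has \emph{some} column realizing alternative (I) or (II), and which column that is depends on $T$, over which you have no control; choosing the $W_p$ cleverly in the domain does not let you steer where the witnesses land in the images. Since, as you correctly note, pairwise TEA does not aggregate, the conclusion $\|\sum_p T(W_p)\|=4\|T\|$ does not follow from what you have proved. The $n=2$ fallback is in the same condition: the ``column correspondence $\pi$'' (that $T(S)e_{\pi(j)}$ depends only on $Se_j$ up to a unimodular scalar) is essentially a piece of the final structure theorem and is asserted, not derived. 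So the proposal is a plan whose central step is missing.

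The paper avoids the aggregation problem altogether, and in fact keeps the decomposition $E_{ij}=\tfrac12(S_1+S_2)$ that you reject as inadmissible. It introduces two further extreme contractions $S_3,S_4$ (with $j$-th column $\mathbf{e_k}$ and off-$j$ columns $\pm Se_r$) satisfying the \emph{exact operator identity} $S_1+S_4=S_2+S_3$. Evaluating this identity at each column and using that the columns of $T(S_p)/\|T(S_p)\|$ are extreme points of the unit ball of $\ell_1^m$ (unimodular multiples of coordinate vectors), one sees that at any column $r$ where $\{x_r,z_r\}$ or $\{x_r,w_r\}$ is linearly independent the supports on the two sides must match up, forcing $\|T(S_2)\|=\|T\|$ and $\|x_r+y_r\|_1=2$; a single such column already gives $\|T(E_{ij})\|\ge\tfrac12\|T(S_1)e_r+T(S_2)e_r\|_1=\|T\|$. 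The existence of that one column is what Lemma \ref{lem-md} (applied to $(S_1,S_3)$) combined with the real-part rigidity of Lemma \ref{lem-07} supplies --- no common witness for more than one pair is ever needed, and the argument works uniformly for $n\ge 2$. Part (iii) is likewise handled without multi-term averaging: $B$ is split into \emph{two} parallel summands supported on column $j$, parallelism is pushed through $T$, and the unimodular $\lambda$ in the resulting norm equality is forced to be $1$ by a strict triangle inequality using (i) and (ii). If you want to salvage your scheme, you would need either to prove the common-column (or column-correspondence) statement --- which is comparable in difficulty to the lemma itself --- or to replace the averaging step by an exact identity of the paper's type.
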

\begin{proof}
	 Suppose $S$ is an extreme contraction of $\mathcal{L}(\ell_1^n,\ell_1^m)$ such that $\|T(S)\|=\|T\|.$ 	\\
	 
	(i)  	If necessary multiplying $S$ by a unimodular scalar, we assume that either $\{Se_j,\mathbf{e_i}\}$ is linearly independent or $Se_j=\mathbf{e_i}.$ Define extreme contractions $S_1,S_2,S_3,$ $S_4\in \mathcal{L}(\ell_1^n,\ell_1^m)$ as follows:
	\[S_1e_r=\begin{cases}
		 \mathbf{e_i},&\text{ if } r=j\\
		Se_r,&\text{ if } 1\leq r\neq j\leq n
	\end{cases}, \quad S_2e_r=\begin{cases}
		\mathbf{e_i},&\text{ if } r=j\\
		-Se_r,&\text{ if } 1\leq r\neq j\leq n
	\end{cases},\]
	
	\[S_3e_r=\begin{cases}
		\mathbf{e_k}, &\text{ if } r=j\\
		Se_r,& \text{ if } 1\leq r\neq j\leq n
	\end{cases}, \quad
	S_4e_r=\begin{cases}
		\mathbf{e_k}, &\text{ if } r=j\\
		-Se_r,& \text{ if } 1\leq r\neq j\leq n
	\end{cases} .\]
Observe that $E_{ij}=\frac{1}{2}(S_1+S_2).$
	From Lemma \ref{lem-md} it clearly follows that $$\|T(S_1)\|=\|T(S_3)\|=\|T(S_4)\|=\|T\|.$$    Let $k=\|T(S_2)\|.$  Suppose 	for all $1\leq r\leq n,$
	\[\frac{T(S_1)}{\|T\|}e_r=x_r, \quad \frac{T(S_2)}{k}e_r=y_r,\quad \frac{T(S_3)}{\|T\|}e_r=z_r,\quad \frac{T(S_4)}{\|T\|}e_r=w_r,\]
	where $x_r,y_r,z_r,w_r\in E_{\ell_1^m}.$ Since $S_1+S_4=S_2+S_3,$
	\begin{equation}\label{eq-eij}
	x_r\|T\|+w_r\|T\|=y_rk+z_r\|T\|, \quad \forall~~~ 1\leq r\leq n.
	\end{equation}
	If possible, assume that $\{x_r,z_r\},$ and  $\{x_r,w_r\}$ are linearly dependent for all $1\leq r\leq n.$ Then $\{x_r,y_r\}$ is also linearly dependent. Suppose 
	\[x_r=\lambda_ry_r=\mu_rz_r=\nu_rw_r, \quad \forall~~1\leq r\leq n.\]
	Since $\{S_1e_j,S_3e_j\}$ is linearly independent, and $S_1e_r=S_3e_r$ for $r\neq j,$ from Lemma \ref{lem-md}, we have $x_t=z_t,$ and $x_l=-z_l$ for some $1\leq l,t\leq n,$ that is $\mu_t=1,$ and $\mu_l=-1.$   However, from Lemma \ref{lem-07} it follows that $\Re(\mu_r)$ is constant for all $r,$ which is a contradiction. Therefore, there exists $1\leq r\leq n,$ such that either 
	 $\{x_r,z_r\}$ or $\{x_r,w_r\}$   is linearly independent. Now from \eqref{eq-eij} it clearly follows that $k=\|T\|,$ and either $x_r=y_r$ or $\{x_r,y_r\}$ is linearly independent. Therefore,
	 \begin{eqnarray*}
	 \|T\|\geq\|T(E_{ij})\|=\frac{1}{2}\|T(S_1+S_2)\|&\geq&\frac{1}{2}\|T(S_1+S_2)e_r\|_1\\
	 &=&\frac{1}{2}\|x_r+y_r\|_1\|T\|=\|T\|.
	 \end{eqnarray*}

(ii)  Choose $1\leq l\leq n$ such that $l\neq j,$ and choose $e\in E_{\ell_1^m}$ such that $\{Se_l,e\}$ is linearly independent.	
Note that $$\frac{1}{2}(\xi E_{ij}+\eta E_{kj})=\frac{1}{2}(B_{ij}+C_{kj}),$$ where 
\[B_{ij}e_r=\begin{cases}
	\xi \mathbf{e_i},&\text{ if } r=j\\
	e,&\text{ if } 1\leq r\neq j\leq n
	\end{cases}, \quad C_{kj}e_r=\begin{cases}
	\eta \mathbf{e_k},&\text{ if } r=j\\
	-e,&\text{ if } 1\leq r\neq j\leq n
\end{cases}.\]
	Clearly, $B_{ij},C_{kj}$ are extreme contractions of $\mathcal{L}(\ell_1^n,\ell_1^m).$  Moreover, $\{Se_l,B_{ij}e_l\}$ and $\{Se_l,C_{ij}e_l\}$ are linearly independent. So using Lemma \ref{lem-md}, it is easy to observe that 
	$$\|T(B_{ij})\|=\|T(C_{ij})\|=\|T\|,$$ and by  Lemma \ref{lem-01}, $\frac{1}{\|T\|}T(B_{ij}),\frac{1}{\|T\|}T(C_{kj})$ are extreme contractions. Furthermore, since  $\{B_{ij}e_j, -C_{ij}e_j\}$ is linearly independent, and $B_{ij}e_l=-C_{ij}e_l$ for $j\neq l,$ by Lemma \ref{lem-md}, there exists $1\leq r\leq n,$ such that 
	either  $$\Big\{\frac{1}{\|T\|}T(B_{ij})e_r,\frac{1}{\|T\|}T(C_{kj})e_r\Big\} \text{ is linearly independent}$$  or $$\frac{1}{\|T\|}T(B_{ij})e_r=\frac{1}{\|T\|}T(C_{kj})e_r.$$ In each case, $$\|\frac{1}{\|T\|}T(B_{ij})e_r+\frac{1}{\|T\|}T(C_{kj})e_r\|_1=2.$$ Now, 
	\[2\|T\|\geq \|T(B_{ij})+T(C_{kj})\|\geq\|T(B_{ij})e_r+T(C_{kj})e_r\|_1=2\|T\|,\]
	which proves that $\frac{1}{2}\|T(B_{ij}+C_{kj})\|=\|T\|.$	\\
	
	(iii) If $|\alpha|=|\beta|,$ then since $B=|\alpha|(\xi E_{ij}+\eta E_{kj})$ for some $\xi ,\eta \in \mathbb{T},$ the proof follows from (ii). 
	Without loss of generality, assume that $|\alpha|>|\beta|.$ Then 
	\[B=\Big(|\alpha|-|\beta|\Big)\frac{\alpha}{|\alpha|}E_{ij}+|\beta |\Big(\frac{\beta}{|\beta|}E_{kj}+\frac{\alpha}{|\alpha|}E_{ij}\Big).\]
	Note that $$\Big(|\alpha|-|\beta|\Big)\frac{\alpha}{|\alpha|}E_{ij}\parallel |\beta |\Big(\frac{\beta}{|\beta|}E_{kj}+\frac{\alpha}{|\alpha|}E_{ij}\Big).$$ Therefore, $\Big(|\alpha|-|\beta|\Big)\frac{\alpha}{|\alpha|}T(E_{ij})\parallel |\beta |T\Big(\frac{\beta}{|\beta|}E_{kj}+\frac{\alpha}{|\alpha|}E_{ij}\Big).$ So there exists $\lambda\in \mathbb{T}$ such that 
	\begin{eqnarray*}
			&& \|\Big(|\alpha|-|\beta|\Big)\frac{\alpha}{|\alpha|}T(E_{ij})+\lambda |\beta |T\Big(\frac{\beta}{|\beta|}E_{kj}+\frac{\alpha}{|\alpha|}E_{ij}\Big)\|\\
			&=&\|\Big(|\alpha|-|\beta|\Big)\frac{\alpha}{|\alpha|}T(E_{ij})\|+|\beta|\|T\Big(\frac{\beta}{|\beta|}E_{kj}+\frac{\alpha}{|\alpha|}E_{ij}\Big)\|\\
			& =& \Big(|\alpha|-|\beta|\Big)\|T\|+2|\beta|\|T\|, \quad (\text{by (i), (ii)})\\
			&=& \Big(|\alpha|+|\beta|\Big)\|T\|.
	\end{eqnarray*}
	If $\lambda\neq 1,$ then the above equality does not hold, since for $\lambda \neq 1,$
	\begin{eqnarray*}
			&& \|\Big(|\alpha|-|\beta|\Big)\frac{\alpha}{|\alpha|}T(E_{ij})+\lambda |\beta|T\Big(\frac{\beta}{|\beta|}E_{kj}+\frac{\alpha}{|\alpha|}E_{ij}\Big)\|\\
			&=&\|\Big(|\alpha|+(\lambda-1)|\beta|\Big)\frac{\alpha}{|\alpha|}T(E_{ij})+\lambda \beta T(E_{kj})\|\\
			& \leq& \Big||\alpha|+(\lambda-1)|\beta|\Big|\|T(E_{ij})\|+|\beta|\|T(E_{kj})\|   \\
			&=& \Big(\Big||\alpha|+(\lambda-1)|\beta|\Big|+|\beta|\Big)\|T\|\\
			&<& \Big(|\alpha|+|\beta|\Big)\|T\|,
	\end{eqnarray*}
	a contradiction. Therefore, $\lambda=1$ and so
	\begin{eqnarray*}
		&& \Big(|\alpha|+|\beta|\Big)\|T\|\\
		&=&\|\Big(|\alpha|-|\beta|\Big)\frac{\alpha}{|\alpha|}T(E_{ij})+ |\beta|T\Big(\frac{\beta}{|\beta|}E_{kj}+\frac{\alpha}{|\alpha|}E_{ij}\Big)\|\\
		&=&\|T(B)\|.
	\end{eqnarray*}
	\end{proof}

\begin{lemma}\label{lem-05}
Suppose $S=\sum_{r=1}^i\alpha_rE_{rj}\in \mathcal{L}(\ell_1^n,\ell_1^m), ~(1\leq i\leq m,~1\leq j\leq n),$ where $\alpha_1,\ldots, \alpha_i$ are non-zero scalars. 	Then  
	$$\|T(S)\|=\Big(\sum_{r=1}^i|\alpha_r|\Big)\|T\|.$$
\end{lemma}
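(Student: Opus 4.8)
The plan is to prove the lemma by induction on the number $i$ of (nonzero) coefficients, exploiting Lemma~\ref{lem-04} together with the ``$\lambda=1$'' rigidity argument already used in the proof of Lemma~\ref{lem-04}(iii). The cases $i=1$ and $i=2$ are nothing but Lemma~\ref{lem-04}(i) (after scaling) and Lemma~\ref{lem-04}(iii). So I fix $i\ge 3$, assume the assertion for every operator of the prescribed form with fewer than $i$ nonzero coefficients, and first treat the case in which the moduli $|\alpha_1|,\dots,|\alpha_i|$ are \emph{not} all equal. Choose $p,q\in\{1,\dots,i\}$ with $|\alpha_p|=\max_r|\alpha_r|$ and $|\alpha_q|=\min_r|\alpha_r|$; since the moduli are not all equal, $|\alpha_p|>|\alpha_q|$ and $p\neq q$. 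I then split $S=A+D$ with
\[
D=|\alpha_q|\tfrac{\alpha_p}{|\alpha_p|}E_{pj}+\alpha_q E_{qj},\qquad
A=\bigl(|\alpha_p|-|\alpha_q|\bigr)\tfrac{\alpha_p}{|\alpha_p|}E_{pj}+\sum_{\substack{1\le r\le i\\ r\neq p,q}}\alpha_r E_{rj}.
\]
Now $D$ is a two-term combination with nonzero coefficients (and $p\neq q$), so $\|T(D)\|=2|\alpha_q|\,\|T\|$ by Lemma~\ref{lem-04}(iii); and $A$ has exactly $i-1$ nonzero coefficients (the $E_{pj}$-coefficient being nonzero precisely because $|\alpha_p|>|\alpha_q|$), so by the induction hypothesis $\|T(A)\|=\bigl((|\alpha_p|-|\alpha_q|)+\sum_{r\neq p,q}|\alpha_r|\bigr)\|T\|$; hence $\|T(A)\|+\|T(D)\|=\bigl(\sum_{r=1}^{i}|\alpha_r|\bigr)\|T\|$. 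Since $A$ and $D$ are supported on the $j$-th coordinate, $M_A=M_D=\{\mu e_j:\mu\in\mathbb{T}\}$, and a direct computation gives $\|Ae_j+De_j\|_1=\|Ae_j\|_1+\|De_j\|_1$, so $A\parallel D$, whence $T(A)\parallel T(D)$ and there is $\lambda\in\mathbb{T}$ with $\|T(A)+\lambda T(D)\|=\|T(A)\|+\|T(D)\|=\bigl(\sum_{r=1}^{i}|\alpha_r|\bigr)\|T\|$.

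Next I would force $\lambda=1$. Expanding $T(A)+\lambda T(D)$, the two copies of $T(E_{pj})$ merge:
\[
T(A)+\lambda T(D)=\Bigl[(|\alpha_p|-|\alpha_q|)+\lambda|\alpha_q|\Bigr]\tfrac{\alpha_p}{|\alpha_p|}T(E_{pj})+\sum_{\substack{1\le r\le i\\ r\neq p,q}}\alpha_r T(E_{rj})+\lambda\alpha_q T(E_{qj}).
\]
Using the triangle inequality and Lemma~\ref{lem-04}(i) (so $\|T(E_{rj})\|=\|T\|$ for each $r$),
\[
\|T(A)+\lambda T(D)\|\le\Bigl(\bigl|(|\alpha_p|-|\alpha_q|)+\lambda|\alpha_q|\bigr|+\sum_{\substack{1\le r\le i\\ r\neq p,q}}|\alpha_r|+|\alpha_q|\Bigr)\|T\|\le\Bigl(\sum_{r=1}^{i}|\alpha_r|\Bigr)\|T\|,
\]
the last step because $\bigl|(|\alpha_p|-|\alpha_q|)+\lambda|\alpha_q|\bigr|\le(|\alpha_p|-|\alpha_q|)+|\alpha_q|=|\alpha_p|$, with equality forcing $\lambda=1$ as $|\alpha_p|-|\alpha_q|>0$ and $|\alpha_q|>0$. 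Since the left-hand side already equals $\bigl(\sum_r|\alpha_r|\bigr)\|T\|$, equality holds throughout, so $\lambda=1$ and $\|T(S)\|=\|T(A)+T(D)\|=\bigl(\sum_{r=1}^{i}|\alpha_r|\bigr)\|T\|$.

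It remains to handle the case in which all $|\alpha_r|$ equal a common value $c>0$, and for this I would perturb: for $\epsilon>0$ apply the two previous paragraphs to $S_\epsilon:=(1+\epsilon)\alpha_1 E_{1j}+\sum_{r=2}^{i}\alpha_r E_{rj}$, whose moduli $(1+\epsilon)c,c,\dots,c$ are not all equal, to get $\|T(S_\epsilon)\|=(i+\epsilon)c\,\|T\|$; letting $\epsilon\to 0^{+}$ and using continuity of $T$ and of the norm gives $\|T(S)\|=ic\,\|T\|=\bigl(\sum_{r=1}^{i}|\alpha_r|\bigr)\|T\|$. The delicate point, and the main obstacle, is the choice of the splitting $S=A+D$: it must at once make $A$ have one fewer term (so the induction hypothesis applies), make $D$ a two-term combination (so Lemma~\ref{lem-04}(iii) applies), keep $A\parallel D$, and---crucially---share the summand $E_{pj}$ so that the merged coefficient $(|\alpha_p|-|\alpha_q|)+\lambda|\alpha_q|$ is pinned down; this pinning needs $|\alpha_p|>|\alpha_q|$ \emph{strictly}, which is exactly why the all-equal-moduli case cannot be done directly and is pushed to the limiting argument above.
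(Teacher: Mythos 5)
Your proof is correct, and its skeleton coincides with the paper's: induct on $i$, write $S=A+D$ with the two pieces sharing an $E_{pj}$-summand, deduce $A\parallel D$ from the fact that both are supported on $e_j$ and $Ae_j, De_j$ form a TEA pair, and then pin the unimodular $\lambda$ to $1$ because otherwise the merged coefficient of $T(E_{pj})$ strictly contracts under the triangle inequality. The genuine difference is the decomposition. The paper splits a \emph{single} coefficient into two equal halves, taking $A=\sum_{r=1}^{i-2}\alpha_rE_{rj}+\frac{\alpha_{i-1}}{2}E_{(i-1)j}$ and $B=\frac{\alpha_{i-1}}{2}E_{(i-1)j}+\alpha_iE_{ij}$, so the pinning reduces to $|1+\lambda|<2$ for $\lambda\neq 1$, which requires no hypothesis whatsoever on the moduli and hence no case distinction. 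Your split of $\alpha_p$ into the positive multiples $|\alpha_p|-|\alpha_q|$ and $|\alpha_q|$ of $\alpha_p/|\alpha_p|$ needs $|\alpha_p|>|\alpha_q|$ strictly, and this is exactly what forces your separate treatment of the all-equal-moduli case; the limiting argument you give there is valid, since $\|T(S_\epsilon)-T(S)\|\leq \epsilon|\alpha_1|\,\|T\|\to 0$, but it is extra machinery that the equal-halves trick renders unnecessary. One small point of hygiene: your $A$ is supported on the row set $\{1,\dots,i\}\setminus\{q\}$, which is generally not an initial segment $\{1,\dots,i-1\}$, so to invoke the induction hypothesis you should formulate the statement (and the hypothesis) for sums over an arbitrary set of distinct row indices rather than over $\{1,\dots,k\}$. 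This costs nothing, as the base cases from Lemma~\ref{lem-04} and the whole inductive step are blind to which rows are used, but as literally phrased your induction hypothesis does not cover $A$.
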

\begin{proof}
	We prove the lemma using induction on $i.$ 
	For $i=1,2,$ the result follows from Lemma \ref{lem-04} (i), and (iii).
	Suppose the lemma is true for $2\leq k\leq i-1,$ that is, if $S=\sum_{r=1}^{k}\alpha_rE_{rj},$ where $\alpha_1,\ldots, \alpha_{k}$ are non-zero scalars. 	Then  
	$\|T(S)\|=\Big(\sum_{r=1}^{k}|\alpha_r|\Big)\|T\|.$\\
	 Now, consider  $S=\sum_{r=1}^i\alpha_rE_{rj},$ where $i\geq 3.$ Let 
	 \begin{equation}\label{eq-09}
	 	A=\sum_{r=1}^{i-2}\alpha_{r}E_{rj}+\frac{\alpha_{i-1}}{2}E_{(i-1)j} \text{ and } B=\frac{\alpha_{i-1}}{2}E_{(i-1)j}+\alpha_iE_{ij}.
	 	\end{equation} 
 	Then $S=A+B.$
	 Since $$Ae_j=(\alpha_1,\ldots,\alpha_{i-2},\frac{\alpha_{i-1}}{2},0,\ldots,0),\quad  Be_j=(0,\ldots,0,\frac{\alpha_{i-1}}{2},\underbrace{\alpha_i}_{i\text{-th}},0,\ldots,0),$$ so $Ae_j\parallel Be_j.$ Moreover, $e_j\in M_{A}\cap M_B,$ which implies that $A\parallel B,$ that is, 
	 \begin{equation}\label{eq-05}
	 T(A)\parallel T(B).
	 \end{equation}
	 By induction hypothesis, 
	 \begin{equation}\label{eq-06}
	 \|T(A)\|=\Big(\sum_{r=1}^{i-2}|\alpha_{r}|+|\frac{\alpha_{i-1}}{2}|\Big)\|T\|.
	 \end{equation} 
 On the other hand, from Lemma \ref{lem-04} (iii), it follows that
 \begin{equation}\label{eq-07}
 \|T(B)\|=\Big(\frac{|\alpha_{i-1}|}{2}+|\alpha_i|\Big)\|T\|.
  \end{equation}
Now from \eqref{eq-05}, we get $\mu\in\mathbb{T}$ such that
\begin{equation}\label{eq-08}
 \|T(A)+\mu T(B)\|=\|T(A)\|+\|T(B)\|=\Big(\sum_{r=1}^i|\alpha_r|\Big)\|T\|,
 \end{equation} 
where the last equality follows from \eqref{eq-06} and \eqref{eq-07}.  From \eqref{eq-09}, we get
\begin{eqnarray*}
\|T(A)+\mu T(B)\|&=&\|\sum_{r=1}^{i-2}\alpha_{r}T(E_{rj})+\frac{\alpha_{i-1}}{2}(1+\mu)T(E_{(i-1)j})+\mu\alpha_iT(E_{ij})\|\\
&\leq& \Big(\sum_{r=1}^{i-2}|\alpha_{r}|+|\frac{\alpha_{i-1}}{2}(1+\mu)|+|\alpha_i|\Big)\|T\|\\
&<&\Big(\sum_{r=1}^i|\alpha_r|\Big)\|T\|, \quad {\text{if }\mu \neq 1}.
\end{eqnarray*}
Therefore in \eqref{eq-08},  $\mu=1.$ This proves that 
\[\|T(S)\|=\|T(A)+T(B)\|=\Big(\sum_{r=1}^i|\alpha_r|\Big)\|T\|.\]
\end{proof}

Finally, we are ready to prove the implication $(3)\Rightarrow (4),$ for $p=1.$ 
\begin{proof}
 Suppose $A\in\mathcal{L}(\ell_1^n,\ell_1^m)$ and $\|A\|=1.$ Let $A$ be smooth.   We first show that $\|T(A)\|=\|T\|.$ By Lemma \ref{lem-06}, there exists $1\leq j\leq n$ such that  $\|Ae_j\|_1=1,$ and  $\|Ae_i\|_1<1$ for all $1\leq i~(\neq j)\leq n.$ Moreover, $Ae_j=(c_{1j},c_{2j},\ldots, c_{mj}),$ where $c_{kj}\neq 0$ for all $1\leq k\leq m.$
Define
\[Be_i=\begin{cases}
	Ae_j,&\text{ if } i=j\\
	0,&\text{ if } 1\leq i\neq j\leq n
	\end{cases},\quad
De_i=\begin{cases}
	Ae_j,&\text{ if } i=j\\
	-Ae_i,&\text{ if } 1\leq i\neq j\leq n
	\end{cases}.\]
 Then $B=\frac{1}{2}(A+D).$
Since $B=\sum_{i=1}^mc_{ij}E_{ij},$ by Lemma \ref{lem-05}, $$\|T(B)\|=\Big(\sum_{i=1}^m|c_{ij}|\Big)\|T\|=\|Ae_j\|_1\|T\|=\|T\|.$$
Therefore, 
\begin{eqnarray*}
\|T\|&=&\|T(B)\|\\
&=&\frac{1}{2}\|T(A)+T(D)\|\\
&\leq &	\frac{1}{2}(\|T\|\|A\|+\|T\|\|D\|)\\
&=&\|T\|,
\end{eqnarray*}
which implies that $\|T(A)\|=\|T\|.$ Now, since $T$ attains norm at each smooth unit vector of $\mathcal{L}(\ell_1^n,\ell_1^m),$ and the smooth vectors are dense in $\mathcal{L}(\ell_1^n,\ell_1^m),$ (see \cite[P. 171]{H}) so $T$ attains norm at each unit vector of $\mathcal{L}(\ell_1^n,\ell_1^m).$
Therefore, $T$ is a scalar multiple of an isometry.
\end{proof}

\subsection{Proof of Theorem \ref{th-01} for $p=\infty$}\label{sub-infty}
Suppose $T:\mathcal{L}(\ell_\infty^n,\ell_\infty^m)\to \mathcal{L}(\ell_\infty^n,\ell_\infty^m)$ is a non-zero linear map.   Define $\tilde{T}:\mathcal{L}(\ell_1^m,\ell_1^n)\to \mathcal{L}(\ell_1^m,\ell_1^n)$ as follows:
\[\tilde{T}(A):=(T(A^*))^*.\]
Then it is easy to observe that $T$ preserves parallel (resp. TEA) pairs if and only if $\tilde{T}$ preserves parallel (resp. TEA) pairs. Indeed, assume that $T$ preserves parallel  pairs, and  
 $A,B\in \mathcal{L}(\ell_1^m,\ell_1^n)$ are such that $A\parallel B.$ Then 
	\[A^*\parallel B^*\Rightarrow T(A^*)\parallel T(B^*)\Rightarrow (T(A^*))^*\parallel (T(B^*))^*\Rightarrow \tilde{T}(A)\parallel \tilde{T}(B).\]
The proofs of the other parts hold similarly. 	From this observation we immediately get the proof of Theorem \ref{th-01} for $p=\infty.$ For the sake of completeness, we include a proof here.
\begin{proof}
$(1) \Rightarrow (2)$. Since $T$ preserves TEA pairs, so does $\tilde{T}.$ By Theorem \ref{th-01} for $p=1,$ $\tilde{T}$	preserves parallel pairs and $rank(\tilde{T})>1$. Thus, $T$ preserves parallel pairs. Clearly, $rank(T)>1.$ \\

 $(2)\Rightarrow (3).$ From $(2)$ it follows that  $\tilde{T}$	preserves parallel pairs and $rank(\tilde{T})>1$.
So by Theorem \ref{th-01} for $p=1,$ $\tilde{T}$ is invertible. Therefore, $T$ is invertible.\\

 $(3)\Rightarrow (4).$ From $(3)$ it is easy to observe that  $\tilde{T}$	preserves parallel pairs and $\tilde{T}$ is invertible. So by Theorem \ref{th-01} for $p=1,$ $\tilde{T}$ is a scalar multiple of an isometry. Let $A\in \mathcal{L}(\ell_\infty^n,\ell_\infty^m)$ and $\|A\|=1.$ Then 
\[\|T(A)\|=\|(T(A))^*\|=\|\tilde{T}(A^*)\|=\|\tilde{T}\|,\]
and so $T$ attains norm at each unit vector of $ \mathcal{L}(\ell_\infty^n,\ell_\infty^m).$ Therefore $T$ is a scalar multiple of an isometry.\\

 $(4)\Rightarrow (1)$  is trivial.
\end{proof}

\section{Anknowledgement}
The author would like to thank DST, Govt. of India for partial financial support in the form of INSPIRE Faculty Fellowship (DST/INSPIRE/04/2022/001207).

\bibliographystyle{amsplain}

\begin{thebibliography}{99}

	\bibitem{BCMWZ} T. Bottazzi, C. Conde, M. S. Moslehian, P. W\'{o}jcik, and A. Zamani, \textit{Orthogonality and parallelism of operators on various Banach spaces}, J. Aust. Math. Soc., \textbf{106} (2019), (2), 160-183.
	
		\bibitem{H} R. B. Holmes, \textit{Geometric functional analysis and its applications}, Grad. Texts in Math., No. 24, Springer-Verlag, New York–Heidelberg, (1975), x+246 pp.
	
	
	\bibitem{KLPS} B. Kuzma, C. K. Li, E. Poon, and S. Singla, \textit{Linear preservers of parallel matrix pairs with respect to the $k$-numerical radius}, Linear Algebra Appl., \textbf{709} (2025), 342-363.
	
		\bibitem{KLSP} B. Kuzma, C. K. Li, S. Singla, and  E. Poon. \textit{Linear maps preserving parallel matrix pairs with respect to the Ky-Fan k-norm}, Linear Algebra Appl., \textbf{687} (2024), 68-90.
	
	\bibitem{LTWW} C. K. Li, M. C. Tsai, Y.S. Wang and N.C. Wong, \textit{Linear maps preserving $\ell_p$-norm parallel vectors}, arxiv:2407.19276v1 [math.FA] (2024), 16pp.
	
	\bibitem{LTWW2} C. K. Li, M. C. Tsai, Y.S. Wang and N.C. Wong, \textit{Linear maps on matrices preserving parallel pairs}, arxiv:2408.06366v1 [math.FA] (2024), 27pp.
	
	\bibitem{MPS} A. Mal, K. Paul, and D. Sain, \textit{Birkhoff–James orthogonality and geometry of operator spaces}, Infosys Sci. Found. Ser. Math. Sci., Springer, (2024), xiii+251 pp.
	
	\bibitem{MSP} A. Mal, D. Sain, and K. Paul, \textit{On some geometric properties of operator spaces}, Banach J. Math. Anal., \textbf{13} (2019), (1), 174-191.
	
	\bibitem{NT} R. Nakamoto, and S. E. Takahasi, \textit{Norm equality condition in triangular inequality}, Sci. Math. Jpn., \textbf{55} (2002),  (3), 463-466.
	
	\bibitem{Se} A. Seddik, \textit{Rank one operators and norm of elementary operators}, Linear Algebra Appl.,
	\textbf{424} (2007), (1), 177-183.
	
	\bibitem{Sh} M. Sharir, \textit{Extremal structure in operator spaces}, Trans. Amer. Math. Soc., \textbf{186} (1973), 91-111.
	
	\bibitem{ZM} A. Zamani, and M. S. Moslehian, \textit{Exact and approximate operator parallelism}, Canad. Math. Bull., \textbf{58} (2015), (1), 207-224.
	
\end{thebibliography}

\end{document}